\documentclass[10pt]{article}

\usepackage{bbm}
\usepackage{bbold}
\usepackage{amsmath}
\usepackage{amsthm}
\usepackage{amssymb} 
\usepackage{mathrsfs}
\usepackage{graphicx}
\usepackage{authblk}
\usepackage{framed}
\usepackage[backref=none,hypertexnames=false,colorlinks=true,urlcolor=blue,linkcolor=blue,citecolor=blue]{hyperref}
\usepackage[letterpaper,text={7in,9in},centering]{geometry}
\usepackage{comment}
\usepackage[table,xcdraw]{xcolor}
\usepackage[utf8]{inputenc}
\usepackage{graphicx}
\usepackage{float}
\usepackage{caption}
\usepackage{subcaption}
\usepackage{endnotes}

\usepackage[english]{babel}

\makeatletter\@addtoreset{equation}{section}\makeatother
\renewcommand{\theequation}{\arabic{section}.\arabic{equation}}

\newtheorem{thm}{Theorem}[section] 
\newtheorem{lem}[thm]{Lemma}  
 
\newtheorem{prop}[thm]{Proposition}

\newtheorem{rmk}[thm]{Remark}

   \def\ri{\texttt{i}}
\def\rd{{\rm d}} \def\re{{\rm e}} \def\fr{\mbox{$\frac{1}{2}$}}
\def\frr{\mbox{$\frac{1}{4}$}} \def\frrr{\mbox{$\frac{1}{6}$}} \def\R{\mathbb R}
  
 \def\qand{\quad\mbox{and}\quad}
 \def\uh{\widehat {u}} 
\def\hh{\widehat {h}}
\def\lth{{\langle\!\langle}} \def\lthb{{\Big\langle\!\!\Big\langle}}
\def\rth{{\rangle\!\rangle}} \def\rthb{{\Big\rangle\!\!\Big\rangle}}  \def\buhat{{\widehat{\bf u}}}

\def\iA{\mathscr{A}}
\def\iB{\mathscr{B}}

\def\eps{{\varepsilon}} 
   \def\xd{\mathrm{d}}
\def\langang{\langle\!\langle} \def\rangang{\rangle\!\rangle}

\renewcommand{\theequation}{\arabic{section}.\arabic{equation}}

\begin{document}

\title{Heteroclinic connections between finite-amplitude periodic orbits emerging from a codimension two singularity}
\author[1]{T.J. Bridges}
\author[1]{D.J.B. Lloyd}
\author[2]{D.J. Ratliff}
\author[3]{P. Sprenger}

\affil[1]{\small School of Mathematics and Physics, University of Surrey, Guildford, GU2 7XH, UK}
\affil[2]{\small Department of Maths, Phys and Elec Eng, Northumbria University, Newcastle NE1 8ST, UK}
\affil[3]{\small Department of Applied Mathematics, University of California at Merced, Merced, CA 95343, USA}

\date{}
\maketitle

\begin{abstract}
\noindent Heteroclinic connections between two distinct hyperbolic periodic orbits in
conservative systems are important in a wide range of applications. On the other hand, it is theoretically challenging to find large amplitude connections from scratch and compute them numerically. In this paper,
we use a codimension two singularity, in a family of periodic orbits, as an
organizing center for the emergence of heteroclinic connections.  A normal form
is derived whose unfolding produces two distinct finite amplitude periodic
orbits with an explicit heteroclinic connection. We also construct heteroclinic connections far from the singularity by numerical
continuation, using two numerical strategies: shooting and the core-farfield
decomposition.  A key geometric tool in the numerics is cylindrical foliations for the stable and unstable manifolds and their intersection. We introduce a new property of heteroclinic connections -- the
action -- and show it is an invariant along foliations, it has a jump at a surface of section, and it appears in a central way in the normal form theory.  We find
that the difference in asymptotic phase between minus and plus infinity is also
a key property. The theory is applied to the Swift-Hohenberg equation, the nonlinear
Schr\"odinger with fourth order dispersion, and coupled Boussinesq equations
from water waves, all of which have an energy and action
conservation law.  

\end{abstract}

\section{Introduction}

Heteroclinic connections in dynamical systems are solutions that connect two
distinct states and have applications in astrodynamics~\cite{hs22}, water waves~\cite{sbs23,rtb25}, pattern formation~\cite{kuw19}, and nonlinear optics~\cite{bgbk23}, for example. While heteroclinic
orbits involving equilibria (known as EtoE fronts) or an equilibrium and a
periodic orbit (known as EtoP fronts) in Hamiltonian systems are
well-studied~\cite{Beck2009,abcdssw19}, much less is known about periodic to
periodic connections (known as PtoP fronts) especially in conservative systems.
In this paper, we consider the theory and construction of heteroclinic
connections between two distinct finite-amplitude periodic states in
conservative systems; see Figure~\ref{fig-connection-schematic}  for an example
of the types of heteroclinic connections we are interested in. We present new
results on the role of action, the multiplicity of connections, the computation
of asymptotic phases, and introduce a normal form for the merger of two distinct
{\it finite-amplitude} hyperbolic periodic states (referred to as a codimension
two singularity herein). Unfolding the latter singularity gives explicit
connections, and a strategy for finding starting points of branches of
heteroclinic connections that are otherwise highly challenging to find. We develop a
numerical continuation method to find a singularity, and then use a second continuation strategy to compute large-amplitude connections far from the singularity.  The role of action contains several surprises
including its role in the codimension two singularity, its invariance on the
stable and unstable foliations, its appearance in Floquet theory, and a jump in
action that appears to be intrinsic to a connection. The main application of the theory is to nonlinear
waves and patterns, where these connections are fronts between two stationary or
periodic traveling waves, and so $x$ will feature as the time-like direction.

\begin{figure}[ht]
\begin{center}
\includegraphics[scale=1]{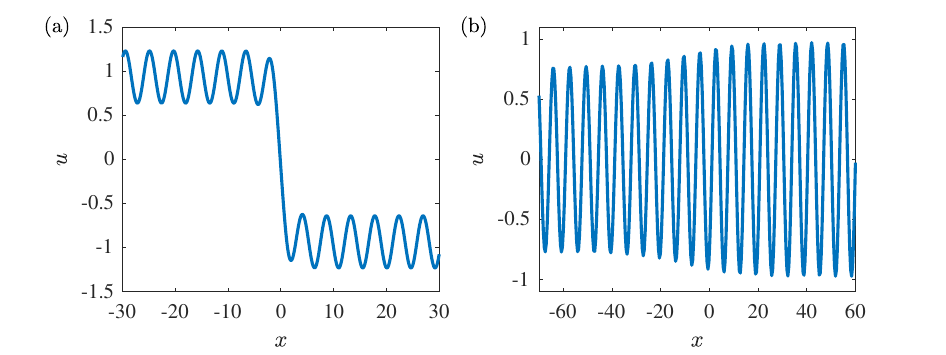}
\end{center}
\caption{Examples of a heteroclinic connection between two distinct periodic
  states (details of this example are in \S\ref{sec-SH357-review}). Panel (a) shows a heteroclinic
  connection between two periodic orbits in the cubic SH equation with
  $\sigma=1$ and panel (b) shows a heteroclinic connection in the SH357 equation with $\mu = 0.305$, $a = 1.438$, and $b = 2.117$. }
\label{fig-connection-schematic}
\end{figure}

There are a range of important developments in the theory and numerics of these
states in the literature that we take as starting points and build on. The
geometry of these heteroclinic connections along with novel numerics has been
reported in \textsc{Bandara et al.}~\cite{bgbk21,bgbk23,bgbk25} as well as
\textsc{Zhang et al.}~\cite{zkk12}, using Lin's method and a boundary-value
approach.  We will reproduce some of their results and extend them. The
multiplicity question (how many heteroclinic connections between two fixed
periodic states) has been addressed by \textsc{Koon et al.}~\cite{klmr00} in
celestial mechanics and \textsc{Kaheman et al.}~\cite{Kaheman2023} for pendulum
dynamics, using shooting, and their strategy will be used here, and we add in an
explicit computation of the asymptotic phases. The importance of the asymptotic
phase on the periodic states at infinity is brought out in the works of
\textsc{Beyn}~\cite{beyn94} and \textsc{Dieci \& Rebaza}~\cite{dr04} and we give
a new take on this by including it in the multiplicity calculation, and taking
advantage of results of \textsc{Palmer}~\cite{p00} to aid in the computation of
the stable and unstable foliations.  We add to this that the action is an
intrinsic invariant of the stable and unstable foliations, but jumps at a
surface of section.

Effective numerical strategies that have been proposed in the literature include
Lin's Method (e.g.\ \textsc{Krauskopf \& Reiss}~\cite{kr09}, \textsc{Knobloch \&
Reiss}~\cite{kr09}), boundary value problems and continuation (e.g.\
\textsc{Beyn}~\cite{beyn94}, \textsc{Pampel}~\cite{p01,p09}, \textsc{Dieci \&
Rebaza}~\cite{dr04}, \textsc{Doedel et al.}~\cite{dkvk09}), shooting (e.g.\
\textsc{Koon et al.}~\cite{klmr00}, \textsc{Barrabes et al.}~\cite{bmo13},
\textsc{Kaheman et al.}~\cite{Kaheman2023}, \textsc{Sprenger}~\cite{sbs23}), pde2path for elliptic PDEs (\textsc{Uecker et al.}~\cite{uwr14}, used for heteroclinics in \cite{kuw19}), and
the core-farfield decomposition (e.g.\ \textsc{Lloyd \& Scheel}~\cite{Lloyd2017}).
Our approach is to use a predictor-corrector strategy, with the shooting as
predictor and the core-farfield decomposition as corrector. The shooting is
effective for calculating foliations but has errors: initial data is near but
not on the periodic orbit, large Floquet multipliers can lead to growth
errors, and it is phase-space dimension limited. The corrector step is used to get improved accuracy. In addition to
accuracy, the advantage of the core-farfield decomposition is that it extends easily to
any dimension phase space, as well as to PDEs in two space dimensions.

Heteroclinic connections between periodic states are of interest in a wide range
of applications: models for localised patterns with  periodic tails (e.g.\
\textsc{Aougab et al.}~\cite{abcdssw19}, \textsc{Knobloch et al.}~\cite{kuw19}),
models of optical pulses traveling down an optical waveguide with quartic
dispersion (e.g.\ \textsc{Bandara et al.}~\cite{bgbk21,bgbk23,bgbk25}), dynamics
of coupled pendulums (e.g.\ \textsc{Kaheman et al.}~\cite{Kaheman2023}),
traveling waves with oscillatory tails in the theory of nonlinear waves (e.g.\
\textsc{Sprenger et al.}~\cite{sbs23}), and the design of spacecraft
trajectories in celestial mechanics (e.g.\ \textsc{Koon et al.}~\cite{klmr00},
\textsc{Wilczak \& Zgliczy\'nski}~\cite{wz03,wz05}, \textsc{Kirchgraber \&
Stoffer}~\cite{ks04}, \textsc{Barrab\'es et al.}~\cite{bmo13}, \textsc{Henry \&
Scheeres}~\cite{hs22}, \textsc{Spear}~\cite{s21}).

In this paper, our examples will be the Swift-Hohenberg (SH) equation with
various nonlinearities, 
the NLS equation with higher order dispersion (NLS4), and a coupled KdV system
system from the theory of water waves. In all of these systems, the steady
equation can be formulated as a Hamiltonian system. However, we will work
primarily with the equations as they arise in applications, and flag up when the
symplectic geometry informs the theory.

To illustrate our strategy, consider the simplest ODE in the above class
\begin{equation}\label{primary-ode}
u_{xxxx} + \sigma u_{xx} + V'(u) = 0 \,,
\end{equation}
for scalar-valued $u(x)$, where $V(u)$ is a given smooth nonlinear function, and
$\sigma$ is a real parameter. This ODE arises in the analysis of the steady SH
equation, time-periodic solutions of the
NLS4, and fifth-order KdV~\cite{sbs23}.  The ODE (\ref{primary-ode}) is the Euler-Lagrange equation for the
Lagrangian with density
\begin{equation}\label{L-def}
L = u_xu_{xxx} + \fr u_{xx}^2 + \fr \sigma u_x^2 - V(u)\,,
\end{equation}
and energy-type function
\begin{equation}\label{H-def}
H = u_xu_{xxx} - \fr u_{xx}^2 + \fr \sigma u_x^2 + V(u)\,.
\end{equation}
To avoid confusion with ``free energy'' (which is $L$ in this case) in
pattern formation, we will call $H$ the Hamiltonian, even though the coordinates
are not symplectic. It is a spatial Hamiltonian function as it satisfies
$dH/dx=0$, when $u$ satisfies (\ref{primary-ode}).

Suppose there exist two distinct periodic solutions of (\ref{primary-ode}) of
wavenumbers $k^-$ and $k^+$. Examples of a heteroclinic connection between
these two states satisfying (\ref{primary-ode}) are shown in Figure
\ref{fig-connection-schematic}. As $x\to-\infty$ ($+\infty$) there is a periodic
solution with a wavenumber $k^{-}$ ($k^+$), and a core region that smoothly
connects the two end states. Further details of these examples are in \S\ref{sec-SH357-review}.

However, Figure \ref{fig-connection-schematic} hides essential properties of the
heteroclinic connection.  Firstly, in order to match each periodic orbit to the
core the correct asymptotic phase has to be computed. Secondly, there is a
non-uniqueness. A pair of distinct periodic orbits on the same Hamiltonian
surface can host many heteroclinic connections. We find that generically there
are at least two, and there can be in principle be any finite number.  The
shooting strategy resolves both of these issues by computing the entire stable
and unstable foliation. Computation of the foliations using shooting is carried
out in a phase space representation of (\ref{primary-ode}). Write
(\ref{primary-ode}) as a first order system following \cite{bgbk21}, with
coordinates ${\bf u}:= (u,u_x,u_{xx},u_{xxx})$ and governing equation,
\begin{equation}\label{first-order-form}
\mathbf{u}_x = \mathbf{f}(\mathbf{u}),\qquad \mathbf{f}(\mathbf{u}) = [u_2,u_3,u_4,-\sigma u_3 - V'(u_1)]^T\,.
\end{equation}
Although not needed at this point it is useful to note that this system is
Hamiltonian with a non-canonical symplectic form
\begin{equation}\label{J-def}
  {\bf J} = \left[\begin{matrix} 0 & -\sigma & 0 & -1\\
      \sigma & 0 & 1 & 0 \\ 0 & -1 & 0 & 0 \\ 1 & 0 & 0 & 0 \end{matrix}
    \right]\,.
\end{equation}
Multiplying (\ref{first-order-form}) by ${\bf J}$ transforms the right-hand side
into $\nabla H$, with $H$ in (\ref{H-def}) expressed in terms of the
${\bf u}$ coordinates.  

The asymptotic phase on a hyperbolic periodic orbit is that value of $\theta$
for which
\begin{equation}\label{asymp-phase}
u(x) \to \uh(z+\theta,k)\,,\quad \mbox{as}\ x\to+\infty\,,
\end{equation}
when $u(x)$ is initialized on the stable manifold of a periodic state, and $\uh$
is the periodic orbit
\begin{equation}\label{u-per-orbit}
  \uh(z,k)\,,\quad \uh(z+2\pi,k)=\uh(z,k)\,,\quad z=kx\,.
\end{equation}
The union of all initial data that lands on a particular asymptotic phase is a
leaf of the stable foliation, and the stable foliation is the union over all
possible asymptotic phases. Similarly for the unstable foliation associated with
the periodic state at $-\infty$.

The shooting algorithm computes the inverse of (\ref{asymp-phase}).
For the stable foliation, a phase value $\theta^+$ is fixed and integration from
a large value of $x$ back to $x=0$ is computed, with initial data on the tangent
space of the stable manifold. Varying $\theta^+$ then creates a cylindrical manifold which
limits on a curve in the surface of section. Repeating this inverse calculation
with the hyperbolic periodic orbit at $-\infty$ creates another curve in the
surface of section. The transversal intersection of these two curves then provides a
candidate for a heteroclinic connection.  However, the situation is much more
interesting as advancing the phases $\theta^{\pm}$ from $0$ to $2\pi$ along the
periodic orbit, and integrating towards the surface of section, generates a
closed curve in the surface of section. To be precise, a closed curve without self-intersection, in the Poincar\'e section, \emph{occurs in most of our calculations}. The exception is the case where a third periodic orbit is present on the same Hamiltonian surface and is inverse-hyperbolic, distorting the curve (see Figure \ref{fig:bandara_comparison} in \S\ref{sec-numerics-I}).

A schematic of the case where we have two closed curves in the surface of section is shown in Figure \ref{fig-manifolds}(c).  The number of intersections between these two
curves gives the multiplicity of heteroclinic connections between the
$\uh^-(z+\theta^-,k^-)$ and $\uh^+(z+\theta^+,k^+)$, and values of the phase which can be used to determine the
asymptotic phases at $\pm\infty$.
\begin{figure}[ht]
\begin{center}
\includegraphics[width=\linewidth]{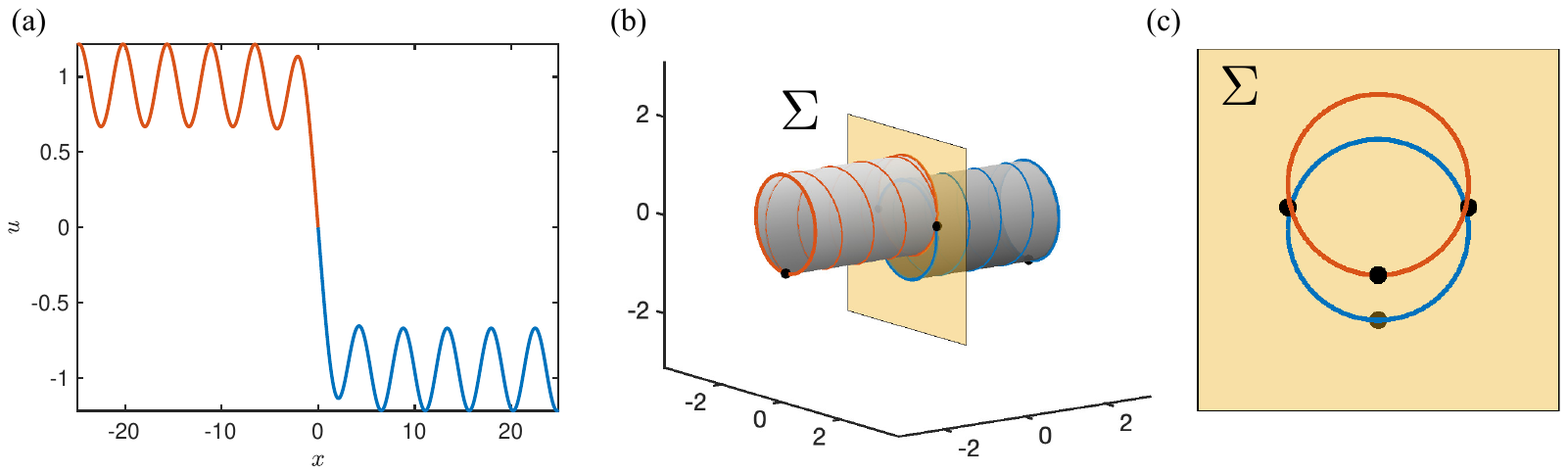}
\end{center}
\caption{Schematic of the computation of the stable and unstable foliations: panel (a) shows
  a heteroclinic connection associated with one of the intersection points in
  (c); panel (b) shows cylindrical foliations obtained by solving the
  inverse problem for the asymptotic phase; and panel (c) shows the surface of section at
  $x=0$ illustrating the intersection between the stable (blue) and unstable (red) foliations.}
\label{fig-manifolds}
\end{figure}
Figure \ref{fig-manifolds}(b) shows a schematic of the stable foliation, with a
leaf in blue on the right, and the unstable foliation, with a leaf in red on the
left. These two cylindrical foliations then intersect in a surface of section as
shown in Figure \ref{fig-manifolds}(c). Figure \ref{fig-manifolds}(a) then shows
the complete heteroclinic connection as a function of $x$.

Given a good approximation to a heteroclinic connection, with periodic
orbits $k^\pm$ and asymptotic phases $\theta^{\pm}$, it is taken as an initial
condition in the global iterative scheme of \cite{Lloyd2017}. The core-farfield
decomposition is a boundary-value solver, but differs from the strategy in BVP
solvers in \cite{beyn94,p01,p09,dr04,dkvk09} in that projection boundary
conditions are not used.  Instead zeroth-order asymptotic boundary conditions
are implemented; see \S3 and Appendix A of \cite{Lloyd2017} for an error analysis and
convergence properties. The decomposition of the solution, using
(\ref{primary-ode}) as an example, is
\[
u(x) = w(x) + \chi_+(x)\uh^+(z+\theta^+,k^+) + \chi_-(x)\uh^-(z+\theta^-,k^-)\,,
\]
with $w(x)$ the core solution, and smooth cut-off functions
\[
\chi_{\pm}(x) =1\,,\quad \pm x>d+1\qand \chi_{\pm}(x)=0\,,\quad \pm x < d\,,
\]
where $d$ is a suitably-chosen large positive constant (see Figure 9 in
\cite{Lloyd2017}).  The complete set of equations, which are then solved iteratively
using Newton's method, is given in equations (2.9)--(2.13) in \cite{Lloyd2017}.
Numerical results are reported in \S \ref{sec-SHE_PtoP} and \ref{sec-boussinesq}. 

There are two ways that the word ``codimension'' will be used here. First
there is the codimension of the intersection of the manifolds, treated as
submanifolds of the phase space and parameter space. This approach was
introduced by \textsc{Beyn}~\cite{beyn94} for heteroclinic connections between
periodic orbits and has been used explicitly or implicitly in all subsequent
numerical studies. It identifies the number of parameters in which the solutions
are generic.  This codimension will be introduced in the setup of the numerics
in \S\ref{sec-numerics-I}. The second notion of codimension used here is the order of the singularity in parameter space of a family of periodic orbits parameterized by
wavenumber.

This latter concept of codimension emerges from the normal form theory.  A
nonlinear normal form is derived in \S\ref{sec-normalform} by phase modulation
of a finite-amplitude periodic state (\ref{u-per-orbit}), starting with the
ansatz
\begin{equation}\label{uhat-modulation-0}
u(x) = \uh(z+\eps^{\rho-1}\phi,k+\eps^{\rho} q) + \eps^\mu v(z+\eps^{\rho-1}\phi,X,\eps)\,,\quad X=\eps^\nu x\,,
\end{equation}
with the constraint $\phi_X=q$ and $v$ a remainder function. The exponents
$\rho,\mu,\nu$ differ depending on the desired normal form. We claim that the nonlinear
normal form for the wavenumber modulation has the remarkably simple form
\begin{equation}\label{nf-action}
\eps^{2\nu+\rho}\mathscr{K} q_{XX} + A(k+\eps^{\rho}q) = 0 \,.
\end{equation}
The function $A(k)$ is the action evaluated on a family of periodic orbits.  It
is a relative integral invariant, and when evaluated on periodic orbits it is
\begin{equation}\label{A-lagr-formula}
A(k) = \frac{d\ }{dk} \overline{L}_k\,,
\end{equation}
where $L$ is the Lagrangian (\ref{L-def}), averaged on a family of periodic
orbits. By defining 
\[
F(q) := \overline{L}(k+\eps^\rho q)\,,
\]
the modulation equation is in the form of Newton's Law for a particle, 
\[
\eps^{2\nu+2\rho}\mathscr{K} q_{XX} = - \nabla F(q)\,,
\]
with $\mathscr{K}$ playing the role of mass.  We will have other interpretations
of $\mathscr{K}$ including strategies for computing it.

At this point the form (\ref{nf-action}) is an assertion.  We will prove it by
expanding $A(k+\eps^\rho q)$ in a Taylor series in $\eps$, then use conventional
phase modulation, based on the ansatz (\ref{uhat-modulation-0}), to construct a
normal form, order by order, and then prove that the coefficients are the Taylor
coefficients of the action. Expanding $A$ in a Taylor series in
(\ref{nf-action}),
\begin{equation}\label{nf-action-taylor}
\eps^{2\nu+\rho}\mathscr{K} q_{XX} + A(k) + A_k(k)\eps^{\rho}q + \fr A_{kk}(k)\eps^{2\rho}q^2 + \frrr A_{kkk}(k)\eps^{3\rho}q^3 + \cdots = 0 \,.
\end{equation}
We call $A_k=0$, for some $k\neq0$, along a branch of periodic orbits a
codimension-one singularity, and when $A_k=A_{kk}=0$, in a two-parameter family, we call
it a codimension-two singularity. This version of codimension points to the
importance of plotting the action as a function of $k$ along families of
periodic orbits.  A schematic of the action plotted as a function of $k$ is in
Figure \ref{fig-Hk-diagram} along with a plot of the spatial Hamiltonian of the system. 

There is a close relationship between singularities in the action, and
singularities in the Hamiltonian function.  Evaluate the Hamiltonian function
(\ref{H-def}) on a family of periodic orbits of (\ref{primary-ode}),
\begin{equation}\label{H-def-k}
H(k) := H(\uh(z,k))\,,
\end{equation}
(using the same symbol to simplify notation). We prove in \S\ref{sec-periodic-states} that $A_k=0$ whenever $H_k=0$.  Hence, the above codimension-two
singularity occurs when
\begin{equation}\label{energy-inflection}
H_k = H_{kk}=0\,,\qand H_{kkk}\neq 0 \,.
\end{equation}
The advantage of this is that the coefficients in the normal form can be also
deduced, at least qualitatively, from a Hamiltonian-wavenumber diagram.

At the codimension two point the normal form, to leading order, is
\begin{equation}\label{q-RE-0}
\eps^{2\nu+\rho}\mathscr{K} q_{XXX} + \fr {A}_{kkk}\eps^{3\rho} q^2q_X= 0 \,.
\end{equation}
We have differentiated the equation (\ref{nf-action-taylor}) as that will be the
form that emerges naturally in the conventional derivation of phase modulation
in \S\ref{sec-normalform}. The terms in (\ref{q-RE-0}) are in balance when
$\rho=\nu=1$, giving
\begin{equation}\label{q-RE}
\mathscr{K} q_{XXX} + \fr {A}_{kkk} q^2q_X= 0 \,.
\end{equation}
Taking $\mu=2$, the ansatz is
\begin{equation}\label{uhat-modulation}
u(x) = \uh(z+\phi,k+ q) + \eps^2 w(z+\phi,X,\eps)\,,\quad X=\eps x\,.
\end{equation}
It is straightforward to insert this ansatz into (\ref{primary-ode}), expand all
terms in Taylor series in $\eps$ and solve order by order.  The solvability
condition at fourth order gives a normal form like (\ref{q-RE}).  The difficult
part is then proving that the coefficients in that derivation are indeed the Taylor
coefficients of the action. The proof is sketched in Appendix \ref{app-c}, with details in the supplementary material.

A typical Hamiltonian-wavenumber diagram, which gives a starting point for
finding a codimension-two point, is shown in Figure \ref{fig-Hk-diagram}.
\begin{figure}[ht]
\begin{center}
\includegraphics[width=7.0cm]{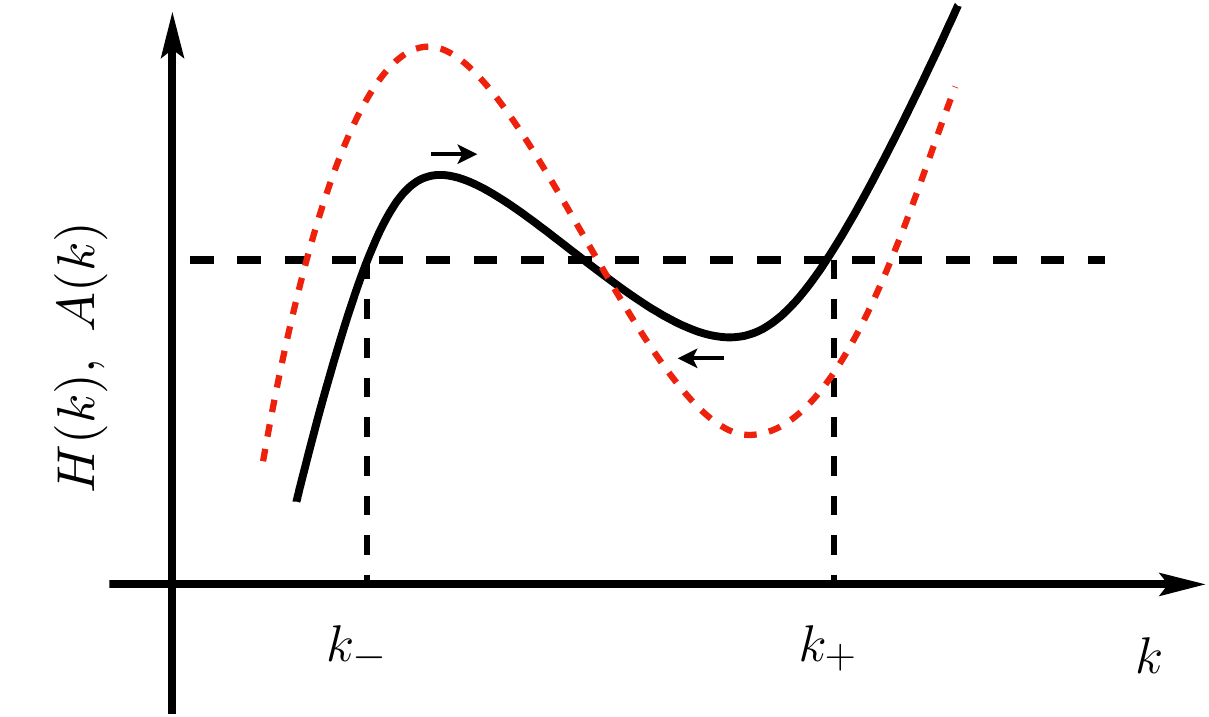}
\end{center}
\caption{Plot of the value of the Hamiltonian and the Action function versus the wavenumber along a branch of periodic states near an inflection point. The solid curve corresponds to the spatial Hamiltonian and the dashed curve to the Action. Arrows indicate the motion of the critical points as the codimension 2 point is approached. }
\label{fig-Hk-diagram}
\end{figure}
There are two critical points, one a max and one a minimum, and the
codimension-two point brings these two singularities together.  If the branch of
periodic orbits to the left of the maximum is hyperbolic, then the branch to the
right of the minimum is also hyperbolic, generically, and so can be connected,
if they are on the same Hamiltonian surface, and satisfy a transversality
condition.  Such a Hamiltonian surface can be found by drawing a horizontal
line. As the codimension-two point is approached the two hyperbolic periodic
orbits are close and explicit formulae for the heteroclinic connection is
obtained. This graphical approach is not the only way to identify pairs of periodic orbits on the same
Hamiltonian  
surface; indeed, since Hamiltonian surfaces are not necessarily connected,
periodic orbits on different components can also be paired.

The normal form is completely integrable. The constant solutions $q^{\pm}$ of
(\ref{q-RE}) represent the periodic states at infinity, and an explicit
heteroclinic exists between these two states, when $\mathscr{K}$ and $A_{kkk}$
have particular signs. A detailed analysis of the normal form in given in
\S\ref{sec-normalform}. The normal form theory extends to any conservative
system on finite dimension.  Hence the above codimension two point answers the
question of how to determine if a given system has any heteroclinic connections
at all: find a codimension two point and check the signs of $\mathscr{K}$ and $A_{kkk}$.

We sketch the origin of the action conservation law, with details in
\S\ref{sec-periodic-states}. The key to action is that it is defined on an
\emph{ensemble of solutions} $u(x,s)$, with $s$ parameterizing the ensemble.
Substituting this ensemble into the Lagrangian $L$ in (\ref{L-def}) and
differentiating gives the exact identity
\begin{equation}\label{A-claw}
L_s + u_s\Big( u_{xxxx}+\sigma u_{xx} + V'(u)\Big) = A_x\,.
\end{equation}
When $u(x,s)$ is a solution (for each $s$), the term in brackets vanishes, and
the conservation law (on ensemble space) is exact: $L_s=A_x$, with
\begin{equation}\label{A-def-2}
A(x,s) = u_{xxx}u_s + u_xu_{xxs} + \sigma u_xu_s\,.
\end{equation}
This function, and particularly its value on families of periodic orbits, will
feature prominently in the theory. Further aspects of action are developed in
\S\ref{sec-periodic-states}.

We will restrict attention to examples where the steady equation has a phase
space of dimension four.  However, much of the strategy extends to higher phase
space dimension, and even to multiple space dimension (see Concluding Remarks
\S\ref{sec-con}).

We've chosen four example PDEs to illustrate the theory.  The first equation is
the Swift-Hohenberg equation
\begin{equation}\label{SH-unsteady}
u_t + u_{xxxx} + \sigma u_{xx} + V'(u)=0\,,
\end{equation}
with polynomial $V(u)$. Two cases of SH of interest are the quadratic-cubic SH equation with
\begin{equation}\label{SH-cubic-V}
V(u) = \frac{1}{2} u^2 +\frac{1}{3}\nu u^3 - \frac{1}{4} u^4\,,
\end{equation}
with $\nu$ a specified real parameter (when $\nu=0$ it will be called the cubic SH equation),
and the SH357 equation, introduced \cite{kuw19}, where
\begin{equation}\label{SH357-V}
V(u) = \frac{1}{2}(1-\mu) u^2 + \frac{1}{4} a u^4 - \frac{1}{6}b u^6 + \frac{1}{8} u^8\,,
\end{equation}
where $a$, $b$, and $\mu$ are real parameters (we replace $\lambda$ in \cite{kuw19} with $\mu$), and $\sigma$ is taken to be $2$ for this model.
The third model is the NLS equation with
fourth-order dispersion \cite{bgbk21,bgbk23,bgbk25},
\begin{equation}\label{nls-4}
  \Psi_t = \ri\gamma |\Psi|^2\Psi - \fr \ri \beta_2 \Psi_{xx} + \ri\frac{\beta_4}{24} \Psi_{xxxx}\,,
\end{equation}
where $\Psi(x,t)$ is complex-valued, and $\gamma$, $\beta_2$, and $\beta_4$ are
real-valued parameters.  By taking $\Psi(x,t) = u(x)\re^{\ri\mu t}$, it reduces to
the steady equation (\ref{primary-ode}) for the real-valued function $u(x)$,
with
\begin{equation}\label{nls-steady-V}
  \sigma = -12\frac{\beta_2}{\beta_4} \qand
  V(u) = -12 \frac{\mu}{\beta_4}u^2  + 6 \frac{\gamma}{\beta_4}u^4\,.
\end{equation}
In the above three models, oscillatory fronts have been found previously, and we
give a new perspective and compare results. A fourth model, for which
oscillatory fronts have not been previously found, is the $ac-$Boussinesq (or coupled KdV) system
\begin{equation}\label{ac-B}
\begin{array}{rcl}
h_t + (u + hf'(u))_x +a u_{xxx} &=&0\\[2mm]
u_t + f(u)_x + gh_x + c h_{xxx} &=& 0\,,
\end{array}
\end{equation}
where $f(u)$ is a given smooth function.  The canonical case, derived from the
water-wave problem, has $f(u)=\fr u^2$. We will extend this form by including
higher order terms in $f$,
\begin{equation}\label{f-def-bouss}
f(u) = \fr u^2 + \frac{\alpha}{3} u^3, 
\end{equation}
for some specified parameter $\alpha$. The steady system relative to the moving frame, $x\mapsto x-Ct$, is
\begin{equation}\label{ac-B-steady}
\begin{array}{rcl}
c h_{xx} &=& F_h\\[2mm]
a u_{xx} &=& F_u\,,
\end{array}
\end{equation}
where $g>0$ is the gravitational constant and $a,c$ are dispersion
coefficients, and
\begin{equation}\label{ac-F-def}
F(h,u) = \iA u + \iB h - \fr gh^2 -\fr u^2 - hf(u) + C hu\,,
\end{equation}
where $\iA,\iB$ are constants of integration. The PDE (\ref{ac-B-steady}) is a model
for water waves ($\alpha=0$) and internal waves ($\alpha\neq0$), with the
coefficients $a,c$ taking specific values (cf.\ \textsc{Bona et
al.}~\cite{bcs-II}). 

The paper is outlined as follows. In Section~\ref{sec-numerics-I}, we review the geometry of stable and unstable manifolds involved in PtoP heteroclinic orbits and explain how the asymptotic phases and multiplicity are determined. As part of the review, we cover the recent numerics of \textsc{Bandara et al.}~\cite{bgbk21,bgbk23,bgbk25} for \eqref{nls-4} and reinterpret them in the context of the geometry, while also extending their result to the case where the $A\rightarrow-A$ symmetry is broken. In Section~\ref{sec-periodic-states}, we introduce the action and its link to Floquet stability analysis of spatially periodic orbits. The normal form theory is then developed in Section~\ref{sec-normalform} that derives the normal form near a Hamiltonian co-dimension 2 point. We describe, in Section~\ref{sec-SH357-review} how to find the Hamiltonian co-dimension 2 point of the periodic orbits and compute the emerging fronts for the SH357. We show in Section~\ref{sec-boussinesq} how the theory applies to the $ac-$Boussinesq equations. Finally, we conclude in Section~\ref{sec-con}. 

\section{Computing foliations in the Generalized NLS Equation}
\setcounter{equation}{0}
\label{sec-numerics-I}

In this section, we take the fourth-order NLS equation of \textsc{Bandara et al.}~\cite{bgbk21,bgbk23,bgbk25} as a starting point,
\begin{equation}\label{gnlse}
  \frac{\partial \Psi}{\partial t} = \ri \gamma|\Psi|^2\Psi -\ri\frac{\beta_2}{2}
  \frac{\partial^2\Psi}{\partial x^2} + \ri \frac{\beta_4}{24}\frac{\partial^4\Psi}{\partial x^4}\,,
\end{equation}
where $\Psi(x,t)$ is complex and the parameters $\beta_2$, $\beta_4$, and $\gamma$
are real. We have replaced coordinates $A$ and $(z,t)$ in \cite{bgbk21,bgbk23,bgbk25}
with $\Psi$ and $(t,x)$ to correspond with notation in this paper.

The aim of this section is three-fold. Firstly, to implement the algorithm for the cylindrical foliations depicted in Figure \ref{fig-manifolds} and illustrate the use of $H-k$ and $A-k$ diagrams. Secondly, to compare with previous work in \cite{bgbk21,bgbk23,bgbk25}, and thirdly to show some new computations of heteroclinic connections which break the $\Psi\mapsto -\Psi$ symmetry in (\ref{gnlse}). We do not find a codimension two point in this example; that will come later when we add a
higher order nonlinearity. 

To reduce (\ref{gnlse}) to a steady problem, time-periodic solutions of the form $\Psi(x,t) = u(x)\re^{\ri\mu t}$ are sought, reducing (\ref{gnlse}) to an ODE for $u(x)$,
\begin{equation}\label{u-ode}
  \frac{\beta_4}{24} \frac{d^4u}{dx^4} - \frac{\beta_2}{2}\frac{d^2u}{dx^2} - \mu u + \gamma u^3 = 0 \,.
\end{equation}
For the cases of interest here, we take parameter values from the papers \cite{bgbk21,bgbk23,bgbk25}: $\beta_4=-1$, $\mu=\gamma=1$. With
these values, and a scaling of $x$, the ODE (\ref{u-ode}) is reduced to the form
in (\ref{primary-ode}) with $V'(u)=u-u^3$,
  \begin{equation}\label{cubic-SH-eqn}
    u_{xxxx} + \sigma u_{xx} + u - u^3 = 0\quad\mbox{with}\quad
\sigma=\beta_2\sqrt{6} \,.
  \end{equation}
  Periodic orbits are denoted by $\uh(z,k)$ and they satisfy the ODE,
  \begin{equation}\label{per-eqn-nls}
  k^4\uh_{zzzz}+\sigma k^2\uh_{zz} + \uh - \uh^3 = 0 \,.
  \end{equation}
  We consider two cases.  Firstly, we look at solutions which have the $u\mapsto -u$ symmetry in \S\ref{subsec-ubar-connect}.  In this case the two period orbits are symmetry-related orbits.  Secondly, we look at solutions that break this symmetry: either by choosing distinct periodic orbits, in \S\ref{subsec-asym}, or by adding an explicit symmetry breaking term in the nonlinearity, in \S\ref{subsec-gnlse-symmetrybreaking}.
  
  \subsection{Connecting symmetry-related periodic orbits}
  \label{subsec-ubar-connect}

 This case gives a clear illustration
  of the cylindrical foliations of the stable and unstable manifolds, as well as
  the calculation of the asymptotic phase and the multiplicity count. 
  For the states at infinity we compute one family of periodic orbits, $\uh(z,k)$ then the other is just minus this, $-\uh(z,k)$.  
  
  For simplicity, take $\sigma=1$, and look for periodic solutions bifurcating from the constant states $u=\pm1$.  Linearizing (\ref{cubic-SH-eqn}) about these constant states and looking for normal mode
  solutions $\re^{\ri kx}$, with $\sigma=1$, gives the characteristic equation
\begin{equation}\label{char-eqn}
k^4 - k^2 -2 = 0 \,.
\end{equation}
The only positive real root is $k=\sqrt{2}$. 
There exists a pair of periodic orbits, $\uh^{\pm}(z,k^\pm)$, parameterized by
$k$, related by symmetry
\[
\uh^-(z,k^-) = -\uh^+(z,k^+):=-\uh(z,k)\,,\quad k^+=k^-:=k\,,
\]
and bifurcating from the constant solutions with $k=\sqrt{2}$. The Hamiltonian and action functions along the branch of periodic orbits are
\[
H = k^4 \uh_z\uh_{zzz} - \fr k^4\uh_{zz}^2 +\fr k^2\uh_z^2 + \fr \uh^2 - \frr \uh^4\qand A = \frac{1}{2\pi}\int_0^{2\pi} \big(\sigma k\uh_z^2 - k^3\uh_{zz}^2\big)\,\rd z\,,
\]
and they are plotted along the branch in Figure~\ref{fig-H-A-k}.
\begin{figure}[ht]
\begin{center}
\includegraphics[width=0.9\linewidth]{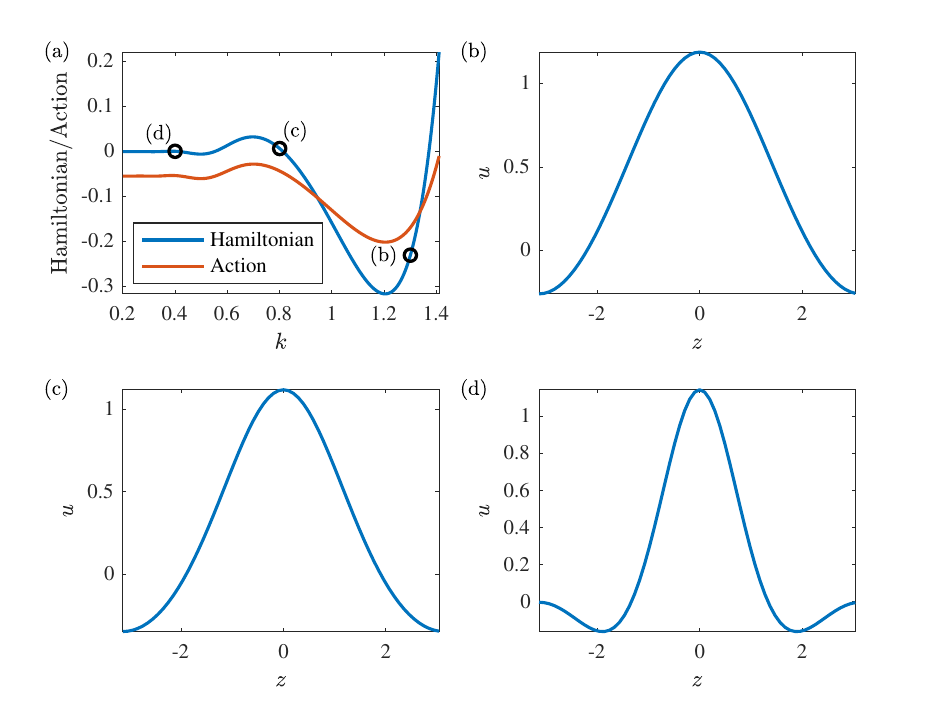}
\end{center}
\caption{A family of periodic solutions of (\ref{per-eqn-nls}) with $\sigma=1$. In panel (a) the Hamitonian and Action are plotted as functions of $k$. Although $H$ and $A$ are qualitatively different, their critical points agree. Panels (b), (c) and (d) show plots of the
  periodic solutions $\uh(z,k)$ as a function of $z$, with $-\pi < z < +\pi$, at three  $k-$points along the curves.}
\label{fig-H-A-k}
\end{figure}
The starting point in Figure \ref{fig-H-A-k} is on the right where $k=\sqrt{2}$ and $H=\frr$. The wavenumber is decreasing in Figure \ref{fig-H-A-k}(a) with increasing amplitude.

The periodic solutions along the curves are hyperbolic, at least for small amplitude (near $k=\sqrt{2},H=1/4$).  This
follows since the constant states are saddle points: the characteristic equation
(\ref{char-eqn}) has two real roots and two purely imaginary.

Using the shooting algorithm, we will show that there exist heteroclinic
connections between these two periodic orbits. For shooting we use the
first-order form (\ref{first-order-form}) which in this case is
\begin{equation}\label{first-order-numerics}
  (u_1)_x = u_2\,,\quad (u_2)_x = u_3\,,\quad (u_3)_x = u_4\,,\quad (u_4)_x = - \sigma u_3 - u_1 + u_1^3\,.
\end{equation}
To find the connections, we will compute the stable and unstable foliations and track their intersection. The definition of a foliation starts with the asymptotic phase. Denote a vector-valued periodic solution of (\ref{first-order-numerics}) by $\buhat(z,k)$.  A point in the stable manifold, ${\bf u}_0$ is said to have asymptotic phase $\theta$ if
\begin{equation}\label{asym-phase-plus}
\|\Phi({\bf u}_0)(x) - \buhat(z+\theta,k)\|\to 0 \quad\mbox{as}\ x\to+\infty\,,
\end{equation}
where $\Phi({\bf u}_0)$ is the flow of (\ref{first-order-numerics}) with initial
data ${\bf u}_0$. Each asymptotic phase defines a leaf. A leaf is the union of
all ${\bf u}_0$ with the same asymptotic phase.  The union of these leaves over
all asymptotic phases is the foliation. \textsc{Palmer}~\cite{p00} proves
the existence and uniqueness of the asymptotic phase, as well as the existence and
smoothness of the foliation. An asymptotic phase on each of the periodic
states at $\pm\infty$ is an essential part of the construction of the
heteroclinic connection.  

We follow the shooting strategy in \textsc{Koon et
al.}~\cite{klmr00} and \textsc{Kaheman et al.}~\cite{Kaheman2023} when computing
a leaf of a foliation. It is initialized with a tangent vector of the periodic orbit using the linearization of (\ref{first-order-numerics}) about the periodic orbit,
\[
\frac{\xd}{\xd x} \begin{bmatrix}
\delta u_1 \\ \delta u_2 \\ \delta u_3 \\ \delta u_4
\end{bmatrix}
=
\begin{bmatrix}
0 & 1 & 0 & 0 \\
0 & 0 & 1 & 0 \\
0 & 0 & 0 & 1 \\
-1 + 3\uh^2 & 0 & -1 & 0
\end{bmatrix}
\begin{bmatrix}
\delta u_1 \\ \delta u_2 \\ \delta u_3 \\ \delta u_4
\end{bmatrix},
\]
where $\delta u_i$ represent small perturbations about the periodic orbit
$\uh^{\pm}(z,k)$. 
We compute the stable and unstable manifolds
by integrating forward and backward in $x$, respectively, starting the integration by perturbing in the
eigendirections corresponding to the stable or unstable Floquet multipliers. For the numerical computations carried out in this section, a
typical perturbation amplitude is $\|\delta \mathbf{u}\| = 10^{-8}$. The foliation is constructed leaf by leaf by incrementing the phase along the periodic orbit.

To determine the heteroclinic connection, we compute the intersection of the
invariant manifolds on an appropriate Poincar\'e section that is transverse to the foliations, as shown schematically in Figure \ref{fig-manifolds}. For the case at hand, a natural choice for the Poincar\'e section is $u_1=0$. The intersection points single out a leaf which is a candidate connection.  The actual connection, and its asymptotic phase, is obtained by integrating from the Poincar\'e section back to the periodic orbit, along the chosen leaf.

\begin{figure}[htp]
\begin{center}
\includegraphics[scale = 0.25]{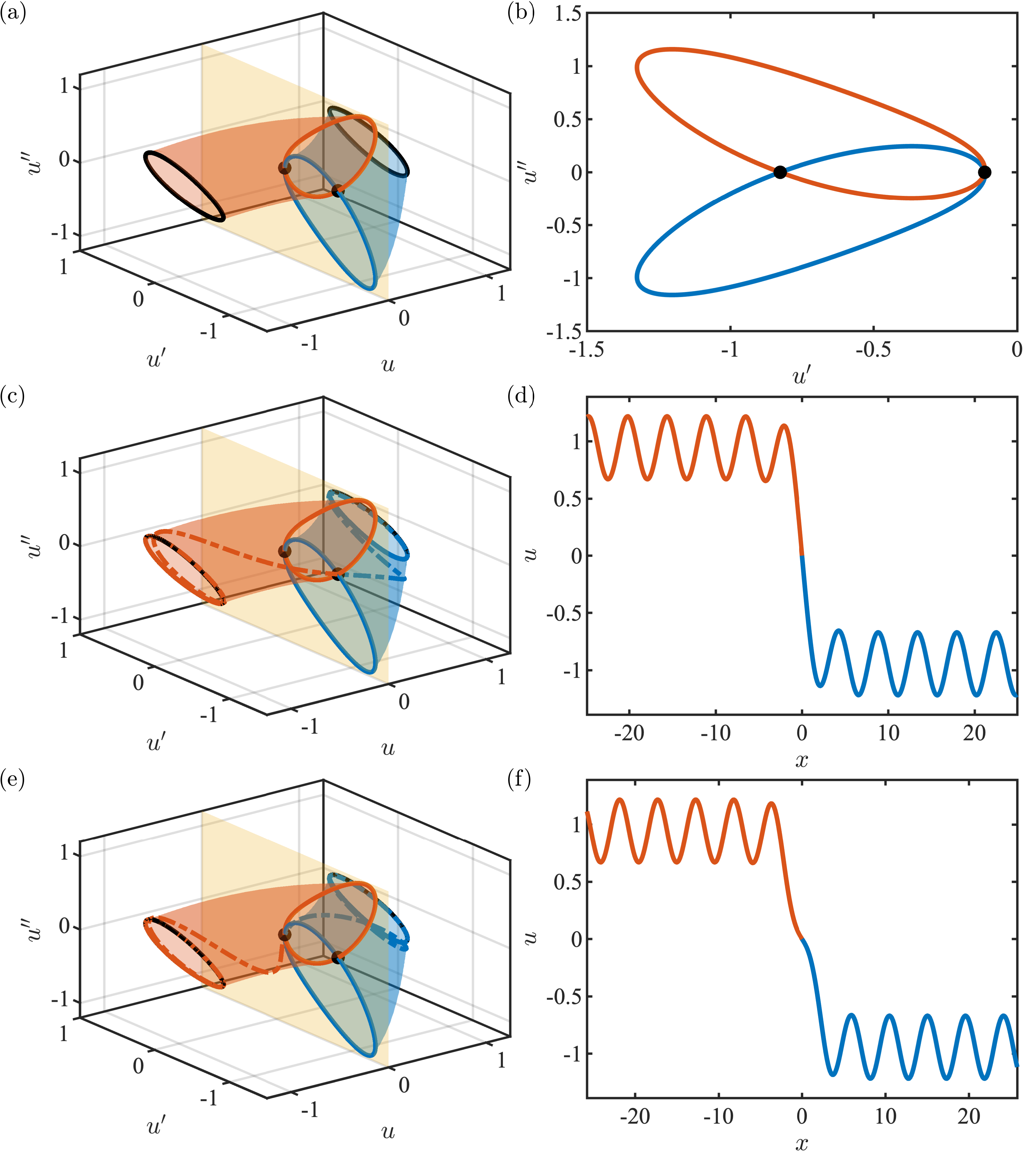}
\end{center}
\caption{A plot of the stable and unstable foliations of the periodic orbits in
\eqref{first-order-numerics}. Panel (a) shows the stable and unstable manifolds
(colored blue and red respectively) starting from the periodic orbits and ending on the (colored yellow) Poincar\'e
section. Panel (b) shows a plot of
the Poincar\'e section, with the two intersection points colored black. Panel (c) includes the manifolds along with the highlighted leaf which is the connection. That connection is shown as a function of $x$ in panel (d). Panel (e) Shows a plot of the other heteroclinic connection on the
manifolds and corresponding heteroclinic connection as a function of $x$ is in (f)}
\label{fig-pat-1}
\end{figure}

In Figure \ref{fig-pat-1}, we present results for the case where $k=1.38$.  The geometry
of the stable and unstable manifolds of the periodic orbits, the Poincar\'e
section and corresponding heteroclinic connections are shown. They are plotted in the
$(u_1,u_2,u_3)$ space, and, in plates (d) and (f), as a function of $x$. Figure \ref{fig-pat-1}(a)
shows the two periodic orbits in black, with centers at $u_1=\pm1$.  The
foliation of the unstable (stable) manifold of the periodic orbit near $-1$ is shown in red (blue).

Figure \ref{fig-pat-1}(b) shows the Poincar\'e section $u_1 = 0$ where the two
manifolds meet as distorted ellipses and it is clear that they intersect in two
points. These two points are then candidates for heteroclinic connections.
In Figure~\ref{fig-pat-1}(c)-(d), we include one of the heteroclinic connections
which smoothly connects the two periodic orbits with the other connection shown
in Figure~\ref{fig-pat-1}(e)-(f).
The connecting orbit is found by winding back leaves of the foliations to
$\pm\infty$, starting with an intersection point.  This calculation gives
the asymptotic phases which we find to be $\pm \theta$, with $\theta\approx
1.36$ (in dimensionless units), giving the asymptotic behavior
\begin{equation}\label{asymp-phase-1}
\lim_{x\to\pm\infty} \| u(x) - \uh^{\pm}(z\mp\theta,k)\| = 0\,.
\end{equation}
Due to the 
$u\rightarrow-u$ symmetry and the fact that the action is sign-invariant, the actions of the two periodic orbits at infinity are equal. 

\subsection{Connecting asymmetric periodic orbits}
\label{subsec-asym}

\begin{figure}[htp]
    \centering
    \includegraphics[scale = 0.25]{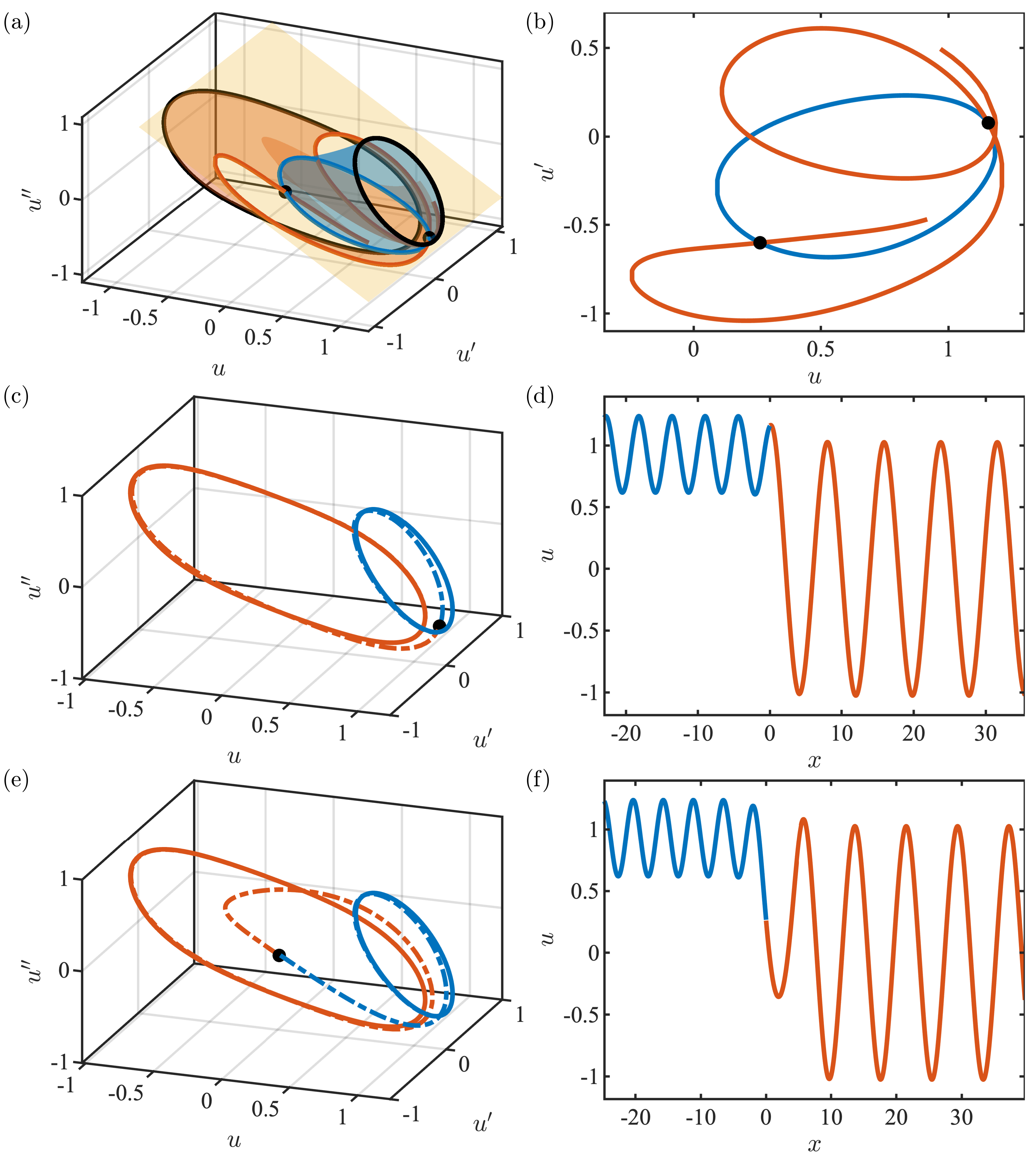}
    \caption{ Panel (a) the stable manifold is shown in red and the unstable
    manifold is shown in blue with the Poincar\'e section at $u=0$ shown in
    yellow. Panel (b) shows the stable and unstable manifolds on the Poincar\'e
    section with the two intersections identified with black dots. Panels (c) and (d)
    show one of the heteroclinic connections. Panels (e) and (f) show the other
    heteroclinic connection. \label{fig:bandara_comparison}}
\end{figure}

\begin{figure}[htp]
    \centering
    \includegraphics[scale = 0.25]{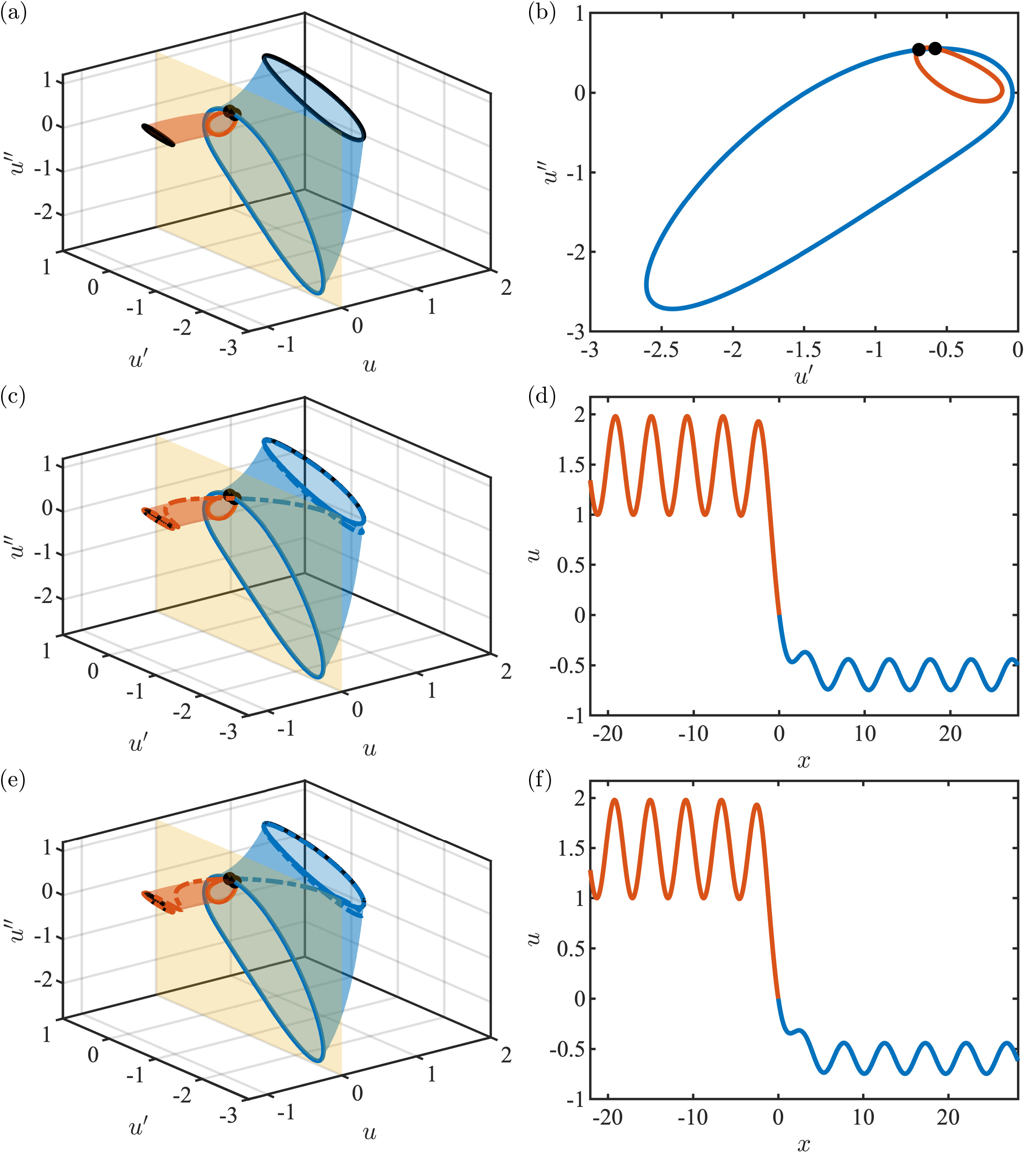}
    \caption{A plot of the stable and unstable manifolds and heteroclinic
    connections for the symmetry breaking case in (\ref{gnlse-with-quadratic-nonlinearity}). Panel (a) the stable
    manifold is shown in red and the unstable manifold is shown in blue with the
    Poincar\'e section at $u_1=0$ shown in yellow. Panel (b) shows the stable and
    unstable manifolds on the Poincar\'e section with the two intersections
    shown in black dots. Panels (c) and (d) show one of the heteroclinic
    connections and the corresponding manifolds. Panels (e) and (f) show the
    other heteroclinic connection and corresponding
    manifolds.\label{f:Pat_quad_cubic}}
\end{figure}

It is possible to connect periodic orbits with different actions in multiple
ways. One of which is to consider periodic solutions of equation
\eqref{cubic-SH-eqn} at $\pm\infty$ with different wavenumbers. Such a configuration
has been studied by \textsc{Bandara et al.}~\cite{bgbk23}.  We will recreate one of the cases in \cite{bgbk23} by direct computations of the invariant manifolds of
periodic solutions of equation \eqref{cubic-SH-eqn} with $\sigma \approx 0.97$.
The far-field hyperbolic periodic orbits have wavenumbers $k_+ \approx 1.365$
and $k_- \approx 0.799$. The wavenumbers are chosen so that the periodic orbits
lie on the level set of the Hamiltonian $H = 0$. The corresponding actions of
the periodic orbits are found to be $A_- \approx -0.18$ and $A_+
\approx -0.1268$, giving  the jump $\Delta A = 0.0532$. Figure \ref{fig:bandara_comparison} shows the computed
manifolds and the intersection with a surface of section,
$$ 1.744 u_1 + 1.929 u_3 = -0.807\,,
$$
(the values of the coefficients are approximate). This Poincar\'e section is chosen as it
effectively separates the two
periodic orbits in the phase space. The stable and unstable manifolds of the
periodic orbits are shown in red and blue, respectively. The surface of section
is shown in yellow. The intersections of the stable and unstable manifolds in
the Poincar\'e section are identified by black dots. Panels (c) and (d) show one
of the heteroclinic connections, while panels (e) and (f) show the other
heteroclinic connection. Note that the intersection of the of stable manifold
with the surface of section does not result in a closed loop. This result is
attributed to the presence of an inverse-hyperbolic periodic orbit that lies between the
the two far-field periodic orbits. Indeed, a heteroclinic connection
that is near this inverse-hyperbolic orbit is shown in Figure
\ref{fig:bandara_comparison} (e) and (f). 

\subsection{Symmetry breaking}
\label{subsec-gnlse-symmetrybreaking}

Another way to connect orbits with different actions is to include an explicit
symmetry breaking term. The additional term considered here is the quadratic
term $u_1^2$ that is added to the last equation in~\eqref{first-order-numerics}; that is, we investigate the equation 
\begin{equation}\label{gnlse-with-quadratic-nonlinearity}
u_{xxxx} + u_{xx} + u + u^2 - u^3=0
\end{equation}
In Figure~\ref{f:Pat_quad_cubic}, we illustrate the geometry of the stable and
unstable manifolds, the Poincar\'e section $u_1 = 0$, and the corresponding
heteroclinic connections that link periodic orbits with wavenumbers \( k_- =
1.503 \) and \( k_+ = 1.31 \). These particular wavenumbers are selected to
highlight the pronounced contrast in the amplitudes of the associated far-field
periodic waves. Specifically, the periodic orbit on the left, corresponding to
\( k_- \), exhibits a significantly larger amplitude than the one on the right.
This amplitude disparity is clearly reflected in the Poincar\'e section shown in
Figure~\ref{f:Pat_quad_cubic}(b), where the orbit with smaller amplitude forms a
more compact red loop, indicating a narrower projection in phase space.
The action integrals associated with these far-field periodic orbits further
quantify this difference: we compute \( A_- = -0.638 \) and \( A_+ = -0.038 \),
respectively, giving the jump $\Delta A = 0.6$.

We now consider general properties of periodic orbits, introduce the codimension two normal form, and then study two examples that exhibit the singularity.

\section{Properties of periodic orbits}
\setcounter{equation}{0}
\label{sec-periodic-states}

We will use (\ref{primary-ode}) for description of the theory, noting that it
carries over to the Boussinesq system (\ref{ac-B}) and related models, with
minor change. A periodic state of wavenumber $k$,
\begin{equation}\label{periodic-state-scalar}
\uh(z,k)\,,\quad \uh(z+2\pi,k)=\uh(z,k)\,,\quad z=kx\,,
\end{equation}
satisfies
\begin{equation}\label{ode-k}
  k^4 \uh_{zzzz} + \sigma k^2 \uh_{zz} + V'(\uh) = 0 \,.
\end{equation}
The Hamiltonian function $H$ in (\ref{H-def}) is independent of the value of
$z$, and can be used to parameterize solutions,
\begin{equation}\label{H-def-k-SH}
H = k^4\uh_z\uh_{zzz} - \fr k^4\uh_{zz}^2 + \fr \sigma k^2 \uh_z^2 + V(\uh)\,,
\end{equation}
(using the same symbol $H$ as in (\ref{H-def}) to simplify notation).

Action is an invariant on solutions that is dual to the Hamiltonian function, in
that it plays a similar role but is a relative invariant. Action is defined for
an ensemble of solutions
\begin{equation}\label{u-ensemble}
u(x,s)\,,\quad s\in(s_1,s_2)\,,
\end{equation}
where at this point $(s_1,s_2)$ is just an open interval of real numbers. The
ensemble can be thought of as parameterizing initial data, but this view is not
essential. The ensemble is assumed to depend smoothly on $s$ and satisfy
(\ref{primary-ode}) for each value of $s$,
\begin{equation}\label{primary-ode-s}
\frac{\partial^4\ }{\partial x^4}u(x,s) +
\sigma \frac{\partial^2\ }{\partial x^2}u(x,s) + V'(u(x,s)) =0\,,\quad
\mbox{for each}\ s\in(s_1,s_2)\,.
\end{equation}
As shown in the introduction action is determined by a conservation law with the
Lagrangian evaluated on the ensemble (\ref{A-claw}).  This conservation law is
confirmed by differentiating $L$ in (\ref{L-def}) with respect to $s$,
\[
\begin{array}{rcl}
L_s &=& \partial_{s}\left(u_xu_{xxx} + \fr u_{xx}^2 + \fr\sigma u_x^2 - V(u)\right)\\[2mm]
&=& u_{xs}u_{xxx} + u_xu_{xxxs}
 +  u_{xx}u_{xxs} + \sigma u_xu_{xs} - V'(u)u_s\\[2mm]
&=& \big(u_{xxx}u_s + u_xu_{xxs} +\sigma u_xu_s\big)_x 
- u_s \big( u_{xxxx} +\sigma u_{xx} +V'(u)\big)\,,
\end{array}
\]
and noting that the second term on the right-hand side vanishes on solutions.
The first term then gives the action
\begin{equation}\label{action-def}
A(x,s) = u_{xxx}u_s + u_xu_{xxs} + \sigma u_xu_s\,.
\end{equation}
The fact that $A$ is an integral invariant follows by taking $s\in S^1$ and
integrating the formula (\ref{A-claw}) over a period, giving
\begin{equation}\label{action-cons}
  \frac{d\ }{dx}\oint_{S^1} \left(
  u_{xxx}u_s + u_xu_{xxs} + \sigma u_xu_s\right)\,\rd s = 0 \,.
\end{equation}
Figure \ref{fig-action-cylinder} shows a schematic of action conservation. A
closed curve of initial data has a value of action, and propagating this closed
curve with the flow of the differential equation creates a cylinder, and the
value of action is the same in each $x-$slice.
\begin{figure}[ht]
\begin{center}
\includegraphics[width=8.0cm]{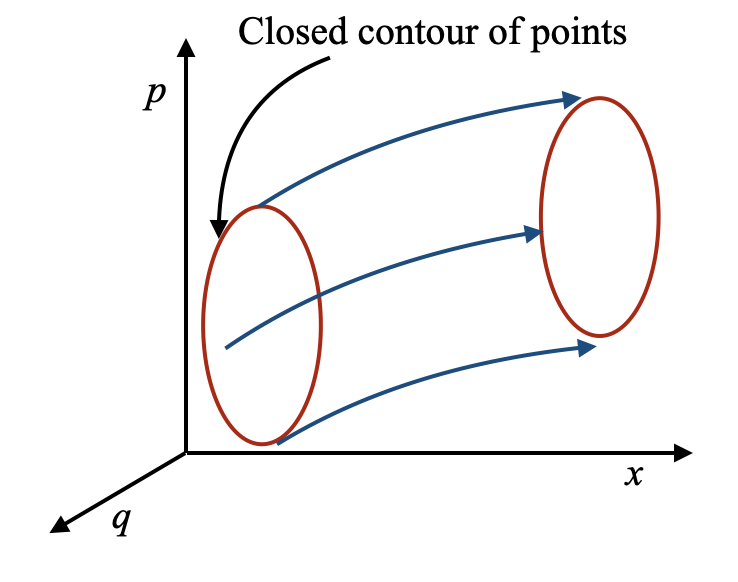}
\end{center}
\caption{Illustration of the Poincar\'e-Cartan theorem, showing how a closed curve of initial data in the phase space carries a constant value of the action under the flow. $(q,p)$ represent symplectic coordinates (see Appendix \ref{sec-symplectic}). 
}
\label{fig-action-cylinder}
\end{figure}
When transformed to symplectic coordinates, this is an example of the Poincar\'e-Cartan theorem (see Appendix \ref{sec-symplectic}). It also explains why the value of the action integral is constant along the stable and unstable foliations, although the value is not defined on distinct leaves of a foliation. This property is in contrast to the Hamiltonian which is constant along each leaf.

A periodic orbit is a special case of an ensemble of points. We can evaluate $A$
in (\ref{action-def}) on a periodic orbit by taking $s=z$ and $x=z/k$, giving
\begin{equation}\label{A-per}
  A(k) = \frac{1}{2\pi}\int_0^{2\pi} \big( \sigma k \uh_z^2 - 2k^3 \uh_{zz}^2\big)\,\rd z\,,
\end{equation}
(using the symbol $A$ again to simplify notation). Since the value of
action on a closed curve is an invariant (\ref{action-cons}), the value of
$A(\uh)$ on the periodic orbits at $\pm\infty$ will be propagated on the stable
and unstable foliations, when the curve in the integral (\ref{action-cons}) is
taken to be transverse to the leaves. That is, each of the two cylinders in
Figure \ref{fig-manifolds} carries its own action invariant, determined by the
value on the periodic orbit at infinity.

The formula (\ref{A-lagr-formula}) is proved as follows.  The averaged
Lagrangian is
\begin{equation}\label{L-avg}
\overline{L} = \frac{1}{2\pi} \int_0^{2\pi} \left( k^4\uh_z\uh_{zzz}+\fr k^4 \uh_{zz}^2 + \fr\sigma k^2\uh_z^2 - V(\uh)\right)\,\rd z\,.
\end{equation}
Integrating by parts, and differentiating with respect to $k$ gives $\overline{L}_k=A(k)$ for $A(k)$ in
(\ref{A-per}).

At points along the periodic orbits where $A_k\neq0$ one pair of Floquet
multipliers is at $+1$ and the other pair is either elliptic, hyperbolic, or
coalesce at $-1$.  Figure \ref{fig-hyperbolic} shows the two generic
configurations. In all the results in this paper, the two periodic orbits are hyperbolic.
\begin{figure}[ht]
\begin{center}
\includegraphics[width=8.0cm]{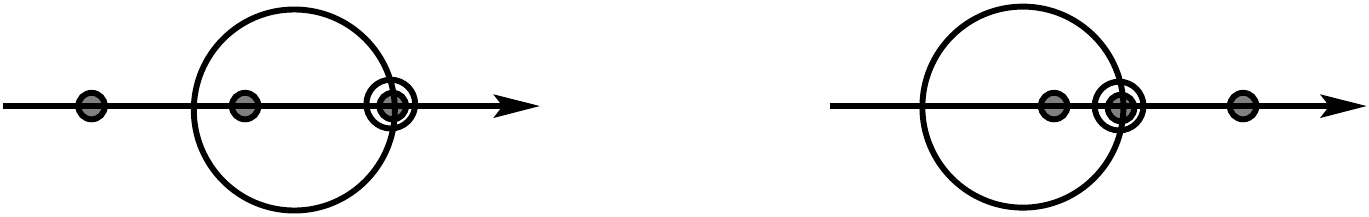}
\end{center}
\caption{The two configurations of hyperbolic Floquet multipliers for periodic
  solutions of (\ref{primary-ode}). The configuration on the left is called
  ``inverse-hyperbolic'' and on the right is ``hyperbolic''.}
\label{fig-hyperbolic}
\end{figure}

The action values of the periodic orbits at $\pm\infty$ are not equal in
general, and so there is a jump in the action when the two cylinders meet in the
surface of section.  This jump gives an intrinsic invariant of the connection
\begin{equation}\label{delta-A-def}
\Delta A := A(k^+)-A(k^-)\,.
\end{equation}
This property is new as far as we are aware.  Examples with numerical values of this jump are in \S\ref{subsec-ubar-connect} and \S\ref{subsec-gnlse-symmetrybreaking}. On the other hand, the
significance of this jump property is not apparent. The jump is zero when the
two periodic orbits match: a homoclinic to the same periodic orbit, or when two
periodic orbits are separated in phase space but are related by symmetry (an
example is in \S\ref{subsec-ubar-connect}). The action appears to be the same on each cylinder when the system is integrable, but we do not have a proof of this. The jump is intrinsic in the sense that
it is invariant under symplectic change of coordinates; this follows by expressing the action in symplectic coordinates in which case the density is ${\bf p}\cdot d{\bf q}$ (see Appendix \ref{sec-symplectic}).

It was pointed out in the introduction that action and the Hamiltonian function have critical points,
$H_k=A_k=0$, at the same value of $k$, along a branch of periodic orbits. We now prove this and give an illustration.

\begin{lem}\label{lemma-Hk} For the equation (\ref{primary-ode}) with Hamiltonian $H$ in (\ref{H-def}) and action (\ref{A-per}) evaluated on a family of periodic orbits with $k\neq0$,
    \begin{equation}\label{HkAk}
H_k = k A_k\,.
\end{equation}
 \end{lem}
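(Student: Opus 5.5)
The plan is to prove the identity through the Legendre-type relation
\[
H(k) = k\,A(k) - \overline{L}(k)
\]
along the family of periodic orbits, and then to combine it with the formula $\overline{L}_k = A(k)$ from (\ref{A-lagr-formula}). Once these two facts are in hand, differentiating gives $H_k = A + kA_k - \overline{L}_k = kA_k$, which is (\ref{HkAk}).

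\emph{Step 1 (the Legendre relation).} Since $H$ in (\ref{H-def-k-SH}) is independent of $z$, I will replace it by its average over $[0,2\pi]$. Integrating by parts with periodicity gives $\overline{\uh_z\uh_{zzz}} = -\overline{\uh_{zz}^2}$. This reduces the averaged Hamiltonian to
\[
H = \frac{1}{2\pi}\int_0^{2\pi}\big(-\tfrac32 k^4\uh_{zz}^2 + \tfrac12\sigma k^2\uh_z^2 + V(\uh)\big)\,\rd z .
\]
The same integration by parts reduces (\ref{L-avg}) to
\[
\overline{L} = \frac{1}{2\pi}\int_0^{2\pi}\big(-\tfrac12 k^4\uh_{zz}^2 + \tfrac12\sigma k^2\uh_z^2 - V(\uh)\big)\,\rd z .
\]
Multiplying (\ref{A-per}) by $k$ gives $kA = \frac{1}{2\pi}\int(\sigma k^2\uh_z^2 - 2k^4\uh_{zz}^2)\,\rd z$. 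Subtracting $\overline{L}$ from $kA$ then reproduces the reduced $H$ term by term, which establishes the relation.

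\emph{Step 2 ($\overline{L}_k = A$).} The paper asserts this, but I will check it because it is the only nontrivial point. Differentiating (\ref{L-avg}) with respect to $k$ produces two kinds of terms. The first are explicit $k$-derivatives of the coefficients, which give exactly $A(k)$ in (\ref{A-per}). The second are terms linear in $\uh_k$. After integration by parts, using the $2\pi$-periodicity of $\uh_k$ in $z$ (the period is fixed in the $z$ variable), these terms collapse to $-\frac{1}{2\pi}\int \uh_k\big(k^4\uh_{zzzz} + \sigma k^2\uh_{zz} + V'(\uh)\big)\,\rd z$. This integral vanishes by (\ref{ode-k}).

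The main obstacle, to the extent there is one, is making Step 2 rigorous. It needs the family $\uh(z,k)$ to be differentiable in $k$ with $\uh_k$ periodic, which I will assume as part of having a smooth branch. It also needs the boundary terms to cancel exactly. The hypothesis $k\neq0$ is not needed for the identity $H_k = kA_k$ itself; it is what lets one conclude $A_k = 0 \iff H_k = 0$, as used in the introduction.
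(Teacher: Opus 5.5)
Your proof is correct, but it is organized differently from the paper's in-text proof.

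\emph{The paper's route.} It differentiates the averaged Hamiltonian (\ref{Hk-per}) directly. It then uses (\ref{ode-k}), tested against $\uh_k$, to trade the $V'(\uh)\uh_k$ term for derivative terms, and matches the result term by term with $k$ times the explicit integrand for $A_k$.

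\emph{Your route.} You split the argument into an algebraic identity, $H = kA - \overline{L}$, and the envelope-type fact $\overline{L}_k = A$ from (\ref{A-lagr-formula}); the lemma then follows in one line. The first identity in fact comes from the pointwise relation $L + H = 2u_xu_{xxx} + \sigma u_x^2$, which is $\mathbf{p}\cdot\mathbf{q}_x$ in the canonical coordinates of Appendix~\ref{sec-symplectic} and averages to $kA$ on a periodic orbit. Beyond identifying $H$ with its $z$-average, the equation enters only through $\overline{L}_k = A$.

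\emph{What your route buys.} It exhibits $H$ as the Legendre transform of $\overline{L}$ in $k$, so you get $A_k = \overline{L}_{kk}$ and $H_k = k\overline{L}_{kk}$ together. This is essentially the Lagrangian counterpart of the paper's symplectic argument in Appendix~\ref{sec-symplectic}, where periodic orbits are critical points of $H - kA = -\overline{L}$ with $k$ as a multiplier. It also ties the lemma directly to the Newton's-law form of the normal form, whose potential is $\overline{L}$, so the codimension-two condition reads $\overline{L}_{kk} = \overline{L}_{kkk} = 0$.

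\emph{What the paper's route buys.} The brute-force version produces the explicit integrand for $A_k$ along the way. The paper needs that formula anyway, to identify the Jordan-chain solvability condition (\ref{solv-3}) with $-A_k$ and the second-order solvability condition in Appendix~\ref{mod-reduction} with $A_k$.

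Your remarks are also accurate on both points: you need a smooth branch with $\uh_k$ periodic in $z$, and $k\neq 0$ matters only for concluding $A_k=0 \Leftrightarrow H_k=0$, not for the identity itself.
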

 
\begin{proof}
    First, since the value of $H$ is independent of $z$, we have
\begin{equation}\label{Hk-per}
H = \frac{1}{2\pi}\int_0^{2\pi} H\,\rd z =
\frac{1}{2\pi}\int_0^{2\pi} \left( -\frac{3}{2}k^4\uh_{zz}^2+
\fr\sigma k^2 \uh_{z}^2 + V(\uh)\right)\,\rd z \,,
\end{equation}
after integration by parts. Differentiate this expression with respect to $k$,
\[
H_k =
\frac{1}{2\pi}\int_0^{2\pi} \left( - 6k^3\uh_{zz}^2
-3k^4\uh_{zz}\uh_{zzk} +\sigma k \uh_{z}^2
+\sigma k^2 \uh_{z}\uh_{zk}+ V'(\uh)\uh_k\right)\,\rd z \,.
\]
Rearrange, using the fact that $\uh$ is a solution
\[
H_k =
\frac{1}{2\pi}\int_0^{2\pi} \left(\sigma k \uh_z^2
+2\sigma k^2 \uh_z\uh_{zk} - 6k^3\uh_{zz}^2 -4k^4\uh_{zz}\uh_{zzk}
+ \Big[k^4 \uh_{zz}\uh_{zzk}- \sigma k \uh_z\uh_{zk}
+ V'(\uh)\uh_k\Big]\right)\,\rd z \,.
\]
The term inside the square brackets vanishes on solutions (after integration by
parts), leaving
\[
H_k =
\frac{1}{2\pi}\int_0^{2\pi} \left(\sigma k \uh_z^2
+2\sigma k^2 \uh_z\uh_{zk} - 6k^3\uh_{zz}^2 -4k^4\uh_{zz}\uh_{zzk}
\right)\,\rd z\,.
\]
Now differentiate $A(k)$ in (\ref{A-per}),
\[
A_k = \frac{1}{2\pi}\int_0^{2\pi}\left(
\sigma \uh_z^2 + 2\sigma k \uh_z\uh_{zk} -6k^2\uh_{zz}^2 - 4k^3\uh_{zz}\uh_{zzk}\right)\,\rd z\,.
\]
Comparing the two proves the result. That is, when $k\neq0$, we have
$kA(k)=H(k)$.
\end{proof}

\begin{rmk} Although (\ref{HkAk}) was proved by direct calculation, it is in fact a
consequence of the symplectic structure, and we will show that after the
symplectic structure is introduced in Appendix \ref{sec-symplectic}.
\end{rmk}
An example of the lemma is shown in
Figure \ref{fig-H-A-k}, where actual calculations of periodic solutions of the cubic SH equation are used.

The equivalence of inflection points in (\ref{HkAk}) does not carry over to the second
derivative since
\[
H_{kk} = A_k + kA_{kk}\,.
\]
However, when $A_k=0$, then $H_{kk}$ and $A_{kk}$ are zero at the same value of
$k$, when $k\neq 0$. This property will be useful in the normal form theory in
\S\ref{sec-normalform}.

\begin{rmk}
One of the main observations here is that parameterization of the periodic
orbits via the Hamiltonian function (which is not uncommon) has the same key
properties as parameterization by action (which is unheard of).  Action is much
easier to work with since it is always a quadratic function.  Indeed, action in
(\ref{A-per}) is a linear combination of the (square of the) $H^1(S^1)$ and
$H^2(S^1)$ Sobolev norms.
\end{rmk}

\subsection{Action and algebraic multiplicity}
\label{subsec-floquet}

The linearization about a periodic state brings in Floquet theory, but it is a
little more interesting in that the spectral problem is nonlinear in the
parameter.
 The most interesting feature is how action feeds
into the Jordan chain theory.

Let $u=\uh + \widetilde v(z,x)$, substitute into (\ref{primary-ode}), and
linearize about $\uh$,
\begin{equation}\label{sh-eqn-zxs}
\left(\frac{\partial\ }{\partial x} + k \frac{\partial\ }{\partial z}\right)^4\widetilde v
 + \sigma \left(\frac{\partial\ }{\partial x} + k \frac{\partial\ }{\partial z}\right)^2\widetilde v +V'(\uh)\widetilde v = 0 \,.
\end{equation}
Introduce a spectral ansatz in $x$, $\widetilde v(z,x) = \re^{\lambda x}
v(z,\lambda)$, giving the nonlinear in the parameter spectral problem
\begin{equation}\label{L-lambda-def}
{\bf L}(\lambda)v := 
\left(\lambda + k \frac{\partial\ }{\partial z}\right)^4v
 + \sigma \left(\lambda + k \frac{\partial\ }{\partial z}\right)^2v +V''(\uh)v\,.
\end{equation}
The tangent vector to the solution is an eigenvector with eigenvalue zero,
\begin{equation}\label{xi1-def}
  {\bf L}(0)\xi_1 = 0\,,\quad \mbox{with}\quad \xi_1 = \uh_z\,.
\end{equation}
  We assume the the geometric multiplicity is one. We define algebraic
  multiplicity for a nonlinear in the parameter eigenvalue problem following
  \textsc{Magnus}~\cite{magnus}, \textsc{Rabier}~\cite{rabier}, and
  \textsc{L\'opez-G\'omez \& Mora-Corral}~\cite{LGMG-book}.  
We have the following result from the above references, stated for the case
needed here (geometric multiplicity one and algebraic multiplicity four).

\begin{prop} \cite{LGMG-book,magnus,rabier} {\it Suppose an eigenvalue $\lambda_*$  of ${\bf
L}(\lambda)$ has geometric multiplicity one and algebraic multiplicity four.
Then there exists a Jordan chain $\xi_1,\ldots,\xi_4$, with $\xi_1$ the
geometric eigenvector satisfying}
\[
{\bf L}(\lambda_*)\xi_1 =0\,,
\]
{\it and three generalized eigenvectors satisfying}
\[
\begin{array}{rcl}
{\bf L}(\lambda_*)\xi_2 &=& - {\bf L}'(\lambda_*)\xi_1\\[2mm]
{\bf L}(\lambda_*)\xi_3 &=& - {\bf L}'(\lambda_*)\xi_2 - \fr {\bf L}''(\lambda_*)\xi_1\\[2mm]
{\bf L}(\lambda_*)\xi_4 &=& - {\bf L}'(\lambda_*)\xi_3 - \fr {\bf L}''(\lambda_*)\xi_2 - \frac{1}{3!}{\bf L}'''(\lambda_*) \xi_1\,.
\end{array}
\]
{\it Moreover, when ${\bf L}(\lambda_*)$ is self-adjoint, the chain ends at four
when}
\begin{equation}\label{jordan-four}
\left\langle\!\left\langle \xi_1(z),-{\bf L}'(\lambda_*)\xi_4 - \fr {\bf L}''(\lambda_*)\xi_3 - \frac{1}{3!}{\bf L}'''(\lambda_*) \xi_2
- \frac{1}{4!}{\bf L}''''(\lambda_*) \xi_1\right\rangle\!\right\rangle\neq 0\,,
\end{equation}
{\it where the inner product is for periodic scalar-valued functions,}
\begin{equation}\label{ip-def}
\lth a,b\rth := \frac{1}{2\pi}\int_0^{2\pi} a(z)b(z)\,\rd z \,.
\end{equation}
\end{prop}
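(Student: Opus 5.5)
The plan is to reduce the statement to the standard local Smith form / root-function characterization of algebraic multiplicity for analytic operator pencils, as in \cite{LGMG-book,magnus,rabier}. Here ${\bf L}(\lambda)$ is a polynomial (degree four) in $\lambda$, acting on $2\pi$-periodic functions. Each ${\bf L}(\lambda)$ is a Fredholm operator of index zero, being a compact perturbation of an invertible constant-coefficient operator. Algebraic multiplicity is defined as the maximal order of vanishing of ${\bf L}(\lambda)\varphi(\lambda)$ over analytic root functions $\varphi(\lambda)$ with $\varphi(\lambda_*)=\xi_1\neq0$.

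\textbf{Step 1: root functions give the Jordan chain.} Write $\varphi(\lambda)=\sum_{j\ge0}(\lambda-\lambda_*)^j\xi_{j+1}$ and expand ${\bf L}(\lambda)=\sum_m \frac{1}{m!}{\bf L}^{(m)}(\lambda_*)(\lambda-\lambda_*)^m$. The coefficient of $(\lambda-\lambda_*)^j$ in ${\bf L}\varphi$ is $\sum_{m=0}^{j}\frac{1}{m!}{\bf L}^{(m)}(\lambda_*)\xi_{j+1-m}$.

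Requiring ${\bf L}(\lambda)\varphi(\lambda)=O((\lambda-\lambda_*)^4)$ is exactly the displayed system for $\xi_1,\dots,\xi_4$. Multiplicity four with geometric multiplicity one means such a $\varphi$ exists, so the chain exists. Solvability at each stage uses the Fredholm alternative: the range of ${\bf L}(\lambda_*)$ is the annihilator of its one-dimensional kernel.

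\textbf{Step 2: termination at four.} The chain extends to $\xi_5$ iff
\[
R:=-{\bf L}'(\lambda_*)\xi_4-\tfrac12{\bf L}''(\lambda_*)\xi_3-\tfrac1{3!}{\bf L}'''(\lambda_*)\xi_2-\tfrac1{4!}{\bf L}''''(\lambda_*)\xi_1
\]
lies in $\mathrm{Range}\,{\bf L}(\lambda_*)$. When ${\bf L}(\lambda_*)$ is self-adjoint for $\lth\cdot,\cdot\rth$, this range equals $\{\xi_1\}^\perp$, so $\lth\xi_1,R\rth\neq0$ is precisely the obstruction to a fifth vector.

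The main obstacle is independence from the choice of chain. Each $\xi_j$ is determined only up to adding combinations of earlier vectors, for example $\xi_2\mapsto\xi_2+c\,\xi_1$, so one must check that the pairing $\lth\xi_1,R\rth$ does not depend on these choices. I would verify this by substituting the shifted chain into $R$ and using the lower-order chain equations, together with self-adjointness, to show the extra terms pair to zero with $\xi_1$. This is exactly the verification that the local Smith form performs in \cite{LGMG-book}. I would therefore cite that invariance, rather than redo it, and conclude that the multiplicity is exactly four.
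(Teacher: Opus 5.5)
Your proposal is correct and consistent with the paper, which gives no argument of its own and simply cites \cite{LGMG-book,magnus,rabier}; reading the chain off the Taylor coefficients of ${\bf L}(\lambda)\varphi(\lambda)$, and using $\mathrm{Range}\,{\bf L}(\lambda_*)=\{\xi_1\}^\perp$ under self-adjointness for termination, is exactly the standard argument behind that citation. The chain-independence you defer to the literature is in fact a short check: two length-four chains with the same $\xi_1$ differ by $\varphi\mapsto p(\lambda-\lambda_*)\varphi \bmod (\lambda-\lambda_*)^4$ with $p(0)=1$, which shifts $R$ by an element of $\mathrm{Range}\,{\bf L}(\lambda_*)$ and therefore leaves $\lth\xi_1,R\rth$ unchanged.
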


\noindent Applying this result to (\ref{L-lambda-def}) with $\lambda_*=0$ and
the geometric eigenvector $\xi_1$ satisfying (\ref{xi1-def}), the chain
$(\xi_1,\xi_2,\xi_3,\xi_4)$ is defined by
\begin{equation}\label{chain-4}
\begin{array}{rcl}
{\bf L}(0)\xi_1 &=&\displaystyle 0\\[3mm]
{\bf L}(0)\xi_2 &=&\displaystyle -\left( 4k^3 \frac{\partial^3\ }{\partial z^3}\xi_1
 + 2\sigma k \frac{\partial\ }{\partial z}\xi_1\right)\\[3mm]
{\bf L}(0)\xi_3 &=&\displaystyle -\left( 4k^3 \frac{\partial^3\ }{\partial z^3}\xi_2
 + 2\sigma k \frac{\partial\ }{\partial z}\xi_2\right) -
6k^2\frac{\partial^2\ }{\partial z^2}\xi_1 - \sigma\xi_1\\[3mm]
{\bf L}(0)\xi_4 &=&\displaystyle -\left( 4k^3 \frac{\partial^3\ }{\partial z^3}\xi_3
 + 2\sigma k \frac{\partial\ }{\partial z}\xi_3\right) -
\left( 6k^2\frac{\partial^2\ }{\partial z^2}\xi_2 + \sigma\xi_2\right)
 - 4k \frac{\partial\ }{\partial z}\xi_1\,.
\end{array}
\end{equation}
For the spectral problem (\ref{L-lambda-def}), the algebraic multiplicity is automatically two, since the $\xi_2$ equation is
solvable,
\[
\left\langle\!\left\langle\xi_1,-\left( 4k^3 \frac{\partial^3\ }{\partial z^3}\xi_1
 + 2\sigma k \frac{\partial\ }{\partial z}\xi_1\right)
\right\rangle\!\right\rangle =0\,.
\]
That this expression is zero can be verified using integration by parts.  There
is also an explicit expression for $\xi_2$, which is obtained by differentiating
the governing equation for $\uh$ in (\ref{ode-k}) with respect to $k$,
\begin{equation}\label{ode-k-SH}
  k^4 (\uh_k)_{zzzz} + \sigma k^2 (\uh_k)_{zz} + V''(\uh)\uh_k
  + 4k^3 \uh_{zzzz} + 2\sigma k \uh_{zz}  = 0 \,.
\end{equation}
This is the equation for $\xi_2$, giving
\[
\xi_2 = \uh_k + C \uh_z\,,
\]
where $C$ is an arbitrary constant.  The most interesting result is the
condition for algebraic multiplicity three, which brings in the action.
\vspace{.15cm}

\begin{prop} {\it Suppose the above spectral problem has
$\lambda=0$ as an eigenvalue of geometric multiplicity one. The algebraic
multiplicity is automatically two. The algebraic multiplicity is three if and
only if $A_k=0$. Moreover when it is three it is automatically four.}
\end{prop}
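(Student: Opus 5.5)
The proof rests on the Fredholm alternative for the self-adjoint operator ${\bf L}(0)=k^4\partial_z^4+\sigma k^2\partial_z^2+V''(\uh)$ on $2\pi$-periodic functions. By the geometric-multiplicity-one assumption its kernel is spanned by $\xi_1=\uh_z$. Hence each equation in the chain (\ref{chain-4}) is solvable if and only if its right-hand side is orthogonal to $\uh_z$ in the inner product (\ref{ip-def}). Multiplicity two is already established in the text: the $\xi_2$ equation is solvable, and $\xi_2=\uh_k+C\uh_z$ by (\ref{ode-k-SH}). So the proof has two steps: evaluate the solvability condition for $\xi_3$ and identify it with $A_k$, then show that the solvability condition for $\xi_4$ holds automatically.

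\textbf{Step 1 (multiplicity three iff $A_k=0$).} I would substitute $\xi_2=\uh_k+C\uh_z$ into the right-hand side of the $\xi_3$ equation and pair with $\uh_z$.

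The $C$-dependent part is $-C\lth \uh_z,(4k^3\partial_z^3+2\sigma k\partial_z)\uh_z\rth$. It vanishes because $4k^3\partial_z^3+2\sigma k\partial_z$ is skew-adjoint on periodic functions. So the condition is independent of the free constant $C$.

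For the remaining terms I would integrate by parts, using $\lth \uh_z,\uh_{kzzz}\rth=-\lth\uh_{zz},\uh_{kzz}\rth$ and $\lth\uh_z,\uh_{zzz}\rth=-\lth\uh_{zz},\uh_{zz}\rth$. Collecting terms, the pairing becomes
\[
\lth \uh_z, \mbox{RHS}_3\rth
= -\lth \sigma\uh_z^2+2\sigma k\uh_z\uh_{zk}-6k^2\uh_{zz}^2-4k^3\uh_{zz}\uh_{zzk}\rth .
\]
By the formula for $A_k$ obtained in the proof of Lemma \ref{lemma-Hk}, this equals $-A_k$. Therefore $\xi_3$ exists if and only if $A_k=0$, which proves the ``if and only if'' statement.

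\textbf{Step 2 (three implies four).} I need $\lth \uh_z,\mbox{RHS}_4\rth=0$ whenever $A_k=0$. I would first attempt a structural argument. Since $\partial_z$ is skew-adjoint, ${\bf L}(\lambda)^*={\bf L}(-\lambda)$ for real $\lambda$. From this, the pairing $\lth\xi_1,\mbox{RHS}_4\rth$ can be rewritten as a sum of terms of the form $\lth\xi_j,{\bf S}\xi_j\rth$, with ${\bf S}$ an odd-order (skew) differential operator. Each such term vanishes.

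Concretely, $\mbox{RHS}_4=-{\bf L}'(0)\xi_3-\fr{\bf L}''(0)\xi_2-\frac{1}{3!}{\bf L}'''(0)\xi_1$, where ${\bf L}'(0)$ and ${\bf L}'''(0)$ are skew and ${\bf L}''(0)$ is symmetric. I would carry out the following moves:
\begin{itemize}
\item move ${\bf L}'(0)$ across the pairing onto $\xi_1$;
\item use the $\xi_2$ equation to replace ${\bf L}'(0)\xi_1$ by $-{\bf L}(0)\xi_2$;
\item move ${\bf L}(0)$ onto $\xi_3$ and use the $\xi_3$ equation;
\item observe that the result cancels against the ${\bf L}''(0)$ term, up to $\lth\xi_2,{\bf L}'(0)\xi_2\rth=0$ and $\lth\xi_1,{\bf L}'''(0)\xi_1\rth=0$.
\end{itemize}
This is the analogue, for polynomial operator pencils, of the fact that Hamiltonian spectra are symmetric under $\lambda\mapsto-\lambda$: there the algebraic multiplicity at $\lambda=0$ is even, so three forces four.

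The main obstacle is the bookkeeping in Step 2. The signs must work out so that the cross terms cancel exactly rather than doubling. If the manipulation does not close, the fallback is the spectral-symmetry argument, using $\det$ of the monodromy and the $\lambda\mapsto-\lambda$ symmetry of the Floquet exponents to force even multiplicity.
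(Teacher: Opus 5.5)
Your proof is correct. Step 1 coincides with the paper's proof: you pair the right-hand side of the $\xi_3$ equation with $\uh_z$, integrate by parts, and recognize $-A_k$. Your observation that the free constant $C$ in $\xi_2=\uh_k+C\uh_z$ drops out, because $4k^3\partial_z^3+2\sigma k\partial_z$ is skew, fills a small point the paper skips by simply taking $\xi_2=\uh_k$.

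Step 2 is where you genuinely differ. The paper only asserts that solvability of the $\xi_4$ equation ``can be verified by direct calculation''. You instead derive it from ${\bf L}(\lambda)^*={\bf L}(-\lambda)$. That is, ${\bf L}_j:=\frac{1}{j!}{\bf L}^{(j)}(0)$ is symmetric with respect to $\lth\cdot,\cdot\rth$ for even $j$ and skew for odd $j$. Your bookkeeping does close, with cancellation rather than doubling:
\[
\lth\xi_1,\mathrm{RHS}_4\rth
=\lth\xi_2,{\bf L}_1\xi_2\rth+\lth\xi_2,{\bf L}_2\xi_1\rth-\lth{\bf L}_2\xi_1,\xi_2\rth-\lth\xi_1,{\bf L}_3\xi_1\rth=0\,.
\]
So the monodromy fallback is unnecessary.

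Your route buys generality and transparency. The identity holds for every admissible choice of $\xi_2,\xi_3$. It uses nothing about $V$ or the specific coefficients beyond the parity structure of the pencil. It also makes concrete the paper's remark that these multiplicity results come from the underlying Hamiltonian structure. The paper's direct calculation confirms the identity only for this particular ODE and gives no indication of why it holds.
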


\begin{proof}
Algebraic multiplicity two is proved above. Algebraic
multiplicity three follows if the $\xi_3$ equation is solvable; that is
\[
\left\langle\!\left\langle\xi_1,-\left( 4k^3 \frac{\partial^3\ }{\partial z^3}\xi_2
 + 2\sigma k \frac{\partial\ }{\partial z}\xi_2\right) -
6k^2\frac{\partial^2\ }{\partial z^2}\xi_1 - \sigma\xi_1\right\rangle\!\right\rangle =0\,.
\]
Integrating by parts and using $\xi_1=\uh_z$ and $\xi_2=\uh_k$, gives
\begin{equation}\label{solv-3}
\left\langle\!\left\langle\uh_z,-\left( 4k^3 \frac{\partial^3\ }{\partial z^3}\uh_k
 + 2\sigma k \frac{\partial\ }{\partial z}\uh_k\right) -
6k^2\frac{\partial^2\ }{\partial z^2}\uh_z - \sigma\uh_z\right\rangle\!\right\rangle \,.
\end{equation}
Noting that 
\[
A_k = \frac{1}{2\pi}\int_0^{2\pi}\left(
\sigma \uh_z^2 + 2\sigma k \uh_z\uh_{zk} - 6k^2\uh_{zz}^2 -
4k^3\uh_{zz}\uh_{zzk}\right)\,\rd z\,,
\]
for $A(k)$ defined in (\ref{A-per}) shows that the integral in (\ref{solv-3}) is
$-A_k$. This proves that algebraic multiplicity three is in one-to-one
correspondence with $A_k=0$. That the algebraic multiplicity is automatically
four follows from solvability of the $\xi_4$ equation, which can be verified by
direct calculation.  That completes the proof. 
\end{proof}

Although we have used the nonlinear in the parameter theory, the above Jordan
chain and multiplicity results are a consequence of the underpinning symplectic structure and comments on this are in Appendix \ref{sec-symplectic}.

The algebraic multiplicity of the above eigenvalue problem will terminate at
four if the equation for $\xi_5$ is not solvable,
\begin{equation}\label{xi5-eqn}
{\bf L}(0)\xi_5 = -\left( 4k^3 \frac{\partial^3\ }{\partial z^3}\xi_4
 + 2\sigma k \frac{\partial\ }{\partial z}\xi_4\right) -
\left( 6k^2\frac{\partial^2\ }{\partial z^2}\xi_3 + \sigma\xi_3\right)
 - 4k \frac{\partial\ }{\partial z}\xi_2 - \xi_1\,.
\end{equation}
Checking solvability,
\[
\lthb \xi_1,\left( 4k^3 \frac{\partial^3\ }{\partial z^3}\xi_4
 + 2\sigma k \frac{\partial\ }{\partial z}\xi_4\right) +
\left( 6k^2\frac{\partial^2\ }{\partial z^2}\xi_3 + \sigma\xi_3\right)
 + 4k \frac{\partial\ }{\partial z}\xi_2 + \xi_1\rthb \neq 0 \,.
\]
Integrate by parts and use periodicity
\[
-4k^3\lthb \frac{\partial^3\ }{\partial z^3}\xi_1,\xi_4\rthb
-2\sigma k \lthb \frac{\partial\ }{\partial z}\xi_1,\xi_4\rthb
 + 6k^2\lthb \frac{\partial^2\ }{\partial z^2}\xi_1,\xi_3\rthb
+ \sigma \lthb\xi_1,\xi_3\rthb 
 - 4k \lthb\frac{\partial\ }{\partial z}\xi_1,\xi_2\rthb
 + \lthb\xi_1,\xi_1\rthb \neq 0 \,.
\]
Now, note that $\xi_1=\uh$ and substitute
\begin{equation}\label{xi5-Kappa}
-4k^3\lthb \uh_{zzzz},\xi_4\rthb
-2\sigma k \lthb \uh_{zz},\xi_4\rthb
 + 6k^2\lthb \uh_{zzz} , \xi_3\rthb
+ \sigma \lthb \uh_z,\xi_3\rthb 
 - 4k \lthb \uh_{zz},\xi_2\rthb
 + \lthb\uh_z,\uh_z\rthb \neq 0 \,.
\end{equation}
This condition should be satisfied for termination of the chain at four. We will
find that this expression is precisely the coefficient $\mathscr{K}$ in the
normal form in (\ref{q-RE}) and (\ref{ch-eqn}).

\begin{rmk} It is well known in the  pattern formation literature and
in the celestial mechanics literature, for example, that $H_k=0$ signals a
turning point in a branch of periodic solutions.  What is surprising here is
that the above direct proof of this is in terms of $A_k=0$, with the association
with $H_k=0$ following indirectly from (\ref{HkAk}).
\end{rmk}


\section{Nonlinear normal form theory}
\setcounter{equation}{0}
\label{sec-normalform}

In this section, we discuss the form and features of the nonlinear normal form
that describes the wavenumber modulation near the codimension two point, $A_k=A_{kk}=0$, using phase modulation~\cite{Hoyle2006}. We then impose the symplectic structure on the phase modulation to prove that the key coefficient in the normal form is related to the action.

The emergence of the normal form starts with the ansatz  
\begin{equation}\label{modulation-ansatz}
u(x) = \uh(z+\phi,k_0+\eps q) + \eps^2 w(z+\phi,X,\eps)\,,
\end{equation}
where $k_0$ is the wavenumber of the unperturbed wave and
\begin{equation}\label{q-def}
X=\eps x\,,\quad q=\phi_X\,,\quad \eps^2w = \eps^2w_2 + \eps^3 w_3 +
\eps^4 w_4 + \cdots\,.
\end{equation}
The ansatz (\ref{modulation-ansatz}) is substituted into (\ref{primary-ode}),
everything is expanded in Taylor series in $\eps$, and solved order by order.
The full details of this procedure can be found in Appendix \ref{mod-reduction},
but we note here the central role of action.  The conditions $A_k(k_0) = 0$ and $A_{kk}(k_0) = 0$ arise as
solvability conditions within the analysis for the $q_X$ and $qq_X$ terms, in the normal form,
respectively. The former of these is the primary singularity in the
modulation approach, and results in the emergence of a third order derivative in the normal form.
The second of these brings in the cubic nonlinearity as a natural
consequence, taking the role of the dominant nonlinearity in the normal
form. As confirmed by a direct asymptotic analysis in Appendix \ref{mod-reduction}, we obtain the normal form at the critical point:
\begin{equation}\label{nf-1}
  \frac{1}{2}A_{kkk}(k_0) q^2q_X + \mathscr{K} \,q_{XXX}= 0 \,.
\end{equation}
The coefficient $\mathscr{K}$ of the spatial derivative can be characterized various ways.  It emerges as a solvability condition in the Jordan chain theory (e.g. Equation (\ref{xi5-Kappa})), and can also be characterized using symplectic Jordan chain theory.  Here we will relate it to the Bloch spectrum of the basic wavetrain $\mu(\nu;k)$ via
\begin{equation}\label{mu-bloch}
\mathscr{K} = \frac{1}{24} \mu''''(0;k_0)
\end{equation}
where primes denote derivatives with respect to $\nu$. This formula is proved in Appendix \ref{app:bloch}. The connection between dispersion and the Bloch exponent was first
identified in dissipative pattern forming systems \cite{doelman2009dynamics} and later
generalized to Hamiltonian systems, and related to Krein signature, in \cite{ratliff2021}. The result is a normal
form that can be constructed entirely using properties of the original wave -
its action and its Bloch spectrum - which can be computed independently of the
asymptotic analysis.

The analysis of this paper is interested in behavior in the vicinity of this
codimension 2 point, where the heteroclinic connections develop further, and
thus we must introduce an unfolding of the codimension 2 point. Formally, the
resulting asymptotic balance requires that
\[
A_k(k_0) = \mathcal{O}(\eps^2)\,, \quad A_{kk}(k_0) = \mathcal{O}(\eps)\,,
\]
and in the sequel we will assume the asymptotic constants associated with these
orderings is $\alpha$ and $\beta$ respectively. Examples of these asymptotic
constants and how they can be obtained is given in
\S\ref{subsec-nf-solutions}. In such cases, the generic unfolding of the
codimension 2 point is as follows:
\begin{equation}\label{q-RE-unfolding-2}
\frac{1}{2} A_{kkk}(k_0)\, q^2q_X + \alpha\, q_X + \beta\, qq_X +\mathscr{K} \, q_{XXX} = 0\,.
\end{equation}
The details of the derivation of (\ref{nf-1}) are given in Appendix
\ref{mod-reduction}.  It is the unfolded version (\ref{q-RE-unfolding-2}) that
we will explore in this paper, as it has explicit heteroclinic connections, noting that the true
codimension 2 point follows by setting $\alpha = \beta = 0$ .

\subsection{Heteroclinic connections via the normal form}
\label{subsec-nf-solutions}

We now explore (\ref{q-RE-unfolding-2}) to determine the criterion for
heteroclinic connections and their mathematical form. We start by integrating
(\ref{q-RE-unfolding-2}) once, resulting in the ODE
\begin{equation}\label{ODE-form}
\mathscr{K} q_{XX} + \alpha q + \frac{1}{2}\beta q^2 + sq^3 + I =0\,, \quad s = \frac{1}{6}A_{kkk}
\end{equation}
where $I$ is the a constant of integration. We can see that $I$ plays the role
of the action of this orbit evaluated on the original unperturbed wave. This can
best be seen comparing the Taylor series of the action,
\[
A(k)\approx A(k_0)+A_{k}(k_0)(k-k_0)+\frac{1}{2}A_{kk}(k_0)(k-k_0)^2+\frac{1}{6}A_{kkk}(k_0)(k-k_0)^3
\]
to the polynomial terms in (\ref{ODE-form}), recalling the definitions of
$\alpha$ and $\beta$, and noting that $k-k_0\approx \eps q$. It is useful to our
discussion to notice that (\ref{ODE-form}) is of the form of a kinetic-potential
Hamiltonian system, with Hamiltonian
\[
H_{NF}(q,q_X) = \frac{1}{2}\mathscr{K} q_X^2+I q + \frac{1}{2}\alpha q^2 + \frac{1}{6}\beta q^3 + \frac{s}{4} q^4 \equiv \frac{1}{2}\mathscr{K} q_X^2+\mathcal{H}(q)\,,
\]
where crucially we note the potential function $\mathcal{H}$ is quartic. The
remainder of our analysis will be to determine the conditions on this quartic
potential that permit heteroclinic connections in the wavenumber $q$ and thus a
PtoP connection of the original underlying wave.

Constant solutions of the normal form (\ref{ODE-form}) correspond to periodic
orbits of (\ref{primary-ode}) and are those with fixed action, satisfying
\begin{equation}\label{I-eqn}
I + \alpha q + \frac{1}{2}\beta q^2 + s q^3 =0\,.
\end{equation}
This is to say that they minimize the potential function $\mathcal{H}$, as the
above polynomial corresponds to the condition $\mathcal{H}'(q) = 0$. Two solutions of this
equation are called conjugate periodic orbits if they both satisfy (\ref{I-eqn})
for the same value of $I$ and $H_{NF}$. 

\begin{rmk} As equilibria of the normal form conjugate orbits have the same action and Hamiltonian, since the normal form is integrable.  However, when lifted to the phase space, the representative periodic orbits will not, in most cases, have the same action, although they will have the same value of the Hamiltonian.
\end{rmk}

Another property of conjugate periodic orbits is that they share the same potential energy $\mathcal{H}(q)$. Thus, our criterion for
conjugate states is entirely determined by the shape of the potential energy -
it must possess two extrema that occur at the same energy level. As such, our
condition for conjugate states reduces to requiring that $\mathcal{H}(q)$ is of
the form
\[
\mathcal{H}(q) = \frac{s}{4}(q-Q_1)^2(q-Q_2)^2
\]
for two real numbers $Q_1\neq Q_2$. We now seek to find the general form of the conjugate states in terms of the
coefficients of the normal form. Denote the two solutions of this problem by
$Q_1$ and $Q_2$ with $Q_1\neq Q_2$. By definition of these being conjugate
states, we must have the following two conditions for minimizing the potential
energy,
\begin{equation}\label{12}
\begin{array}{rcl}
0 &=& I + \alpha Q_1 + \frac{1}{2}\beta Q_1^2 + s Q_1^3\\[2mm]
0 &=& I + \alpha Q_2 + \frac{1}{2}\beta Q_2^2 + s Q_2^3\,,
\end{array}
\end{equation}
and a third condition that they possess the same potential energy value, 
\begin{equation}\label{Ex-eqn}
I Q_1 + \frac{1}{2}\alpha Q_1^2 + \frac{1}{6}\beta Q_1^3 + \frac{s}{4} Q_1^4 = \mathcal{H} =
I Q_2 + \frac{1}{2}\alpha Q_2^2 + \frac{1}{6}\beta Q_2^3 + \frac{s}{4} Q_2^4\,.
\end{equation}
The aim now is to solve the three equations (\ref{12}) and (\ref{Ex-eqn}) for
the three unknown $Q_1,Q_2,I$. Surprisingly, there is an explicit exact
solution. The details are lengthy and will be skipped.  The result is
\begin{equation}\label{x-y-finalform}
Q_{1} = - \frac{\beta}{6s} \pm \frac{1}{s} \sqrt{D}\qand Q_2 = - Q_1 -\frac{\beta}{3s}\,,
\quad D = \frac{1}{12}\beta^2 - s\alpha\,,
\end{equation}
where the sign in the first term is chosen so that $Q_1<Q_2$. Thus, we find a
necessary condition for the heteroclinic connections to exist by imposing our
roots must be real, requiring that
\begin{equation}\label{root-reality-cond}
\frac{1}{12}\beta^2-s\alpha \geq 0 \quad \Rightarrow \quad  A_{kk}(k_0)^2 \geq 2A_k(k_0)A_{kkk}(k_0)\,.
\end{equation}
Back substitute a solution to find expressions for the action $I$ and
$\mathcal{H}$,
\begin{equation}\label{I-alpha-beta}
I =   \frac{\beta}{108 s^2}\left( 18 s \alpha -  \beta^2\right)\,,\quad
\mathcal{H}=  -\frac{(18s\alpha - \beta^2)^2}{1296s^3} = -\frac{9s}{\beta^2}I^2\,.
\end{equation}
Hence for any $(\alpha,\beta)\in\R^2$ with $\alpha,\,\beta  = o(\eps^{-1})$,
these formulas give the values of $I$ and $\mathcal{H}$ at which conjugate
periodic orbits $Q_1$ and $Q_2$ in (\ref{x-y-finalform}) exist.

This gives the existence of conjugate periodic orbits. The next necessary condition
for a heteroclinic connection is that both conjugate periodic orbits be
hyperbolic. Hyperbolicity is determined by linearizing (\ref{ODE-form}) about
either constant state, denoted by $Q_0$,
\begin{equation}\label{ODE-form-lse}
\mathscr{K} \delta Q_{XX} + \big(\alpha + \beta Q_0 + 3sQ_0^2\big)\delta Q =0\,.
\end{equation}
Hence a basic state is hyperbolic when
\[
{\rm sign}\Big( \mathscr{K} \mathcal{H}''(Q_0)\Big)<0\,.
\]
Substitution of either $Q_{1}$ or $Q_2$ into this condition reveals the two
solutions are hyperbolic when 
\[
\frac{\mathscr{K}}{s}\left(\frac{1}{12}\beta^2-s\alpha\right)<0\,.
\]
As the bracketed term is assumed positive for the existence of the conjugate
states, we therefore have the condition for hyperbolicity as
\[
{\rm sign}(\mathscr{K}s) \equiv {\rm sign}\big(\mathscr{K}A_{kkk}(k_0)\big)<0\,,
\]
i.e. that the underlying stationary modified KdV is defocussing, in line with
previous studies on the stability of solutions within equations of this type
\cite{Bronski2016,Kamchatnov2012,Driscoll1977}. This observation, as well as the
condition for existence, are summarized as follows.

Let $\hat{u}(k_0x+\theta_0) \equiv
    \hat{u}(z)$ be a periodic solution of (\ref{primary-ode}). Suppose a local
    wavenumber perturbation $k$ is introduced such that $|k-k_0|\ll 1$. Then,
    provided that
    \[
    A_k(k_0) = \mathcal{O}(|k-k_0|^2)\,, \quad  A_{kk}(k_0) = \mathcal{O}(|k-k_0|)\,,
    \]
    a hyperbolic pair of conjugate states representing a heteroclinic connection
    between two periodic states emerges from the point $A_k(k_0) = A_{kk}(k_0) =
    0$ provided that
    \[
    A_{kk}(k_0)^2 \geq 2A_k(k_0)A_{kkk}(k_0) \quad {\rm and} \quad \quad {\rm sign}\big(\mu''''(0;k_0)A_{kkk}(k_0)\big)<0\,,
    \]
    where $\mu(\nu;k_0)$ is the Bloch spectrum of the wavetrain $\hat{u}(z)$.

To finish our discussion here, we provide the explicit form of the heteroclinic
connection between the two wavetrains as prescribed by the normal form
(\ref{ODE-form}). This can most readily be obtained by multiplying this equation
by $q_X$ and integrating once with respect to $X$ and prescribing the conditions
for a repeated double root of the resulting quartic polynomial (requiring
(\ref{root-reality-cond}) to hold) and assuming $\mathscr{K} s <0$. In such a
case, we end up with the first order ODE
\[
q_X = \pm \sqrt{-\, \frac{s}{4\mathscr{K}}} \ (q-Q_1)(Q_2-q)\,, \quad Q_1<q<Q_2\,.
\]
This has a pair of tanh solutions, one for each sign of the right-hand side:
\[
q(X) = \frac{(Q_1+Q_2)}{2} \pm \frac{ (Q_2-Q_1)}{2}{\rm tanh}\big(\delta X\big)\,,\quad \delta = \sqrt{\left\vert\frac{s}{4\mathscr{K}} \right\vert} \ {}\frac{(Q_2-Q_1)}{2\sqrt{2}}\,,
\]
This gives a prototypical heteroclinic connection between two asymptotic
wavenumbers $k_0+\eps Q_1$ and $k_0+\eps Q_2$. The strength of the connection
(i.e. its amplitude) is precisely the difference between the two roots. The
above form also highlights that the strength of the connection, the strength of
the cubic term in (\ref{ODE-form}) and the Bloch spectrum all control the width
of the heteroclinic connection. The resulting perturbed wave phase is
\[
\phi(X) = \int Q(Y)dY = \frac{(Q_1+Q_2)}{2} X\pm\frac{Q_2-Q_1}{2\delta}\ln \big(\cosh(\delta X)\big)+\phi_0 \equiv \phi_0+\eps \frac{Q_1+Q_2}{2} x+\frac{Q_2-Q_1}{2\delta}\ln \big(\cosh(\eps \delta x)\big)\,.
\]
The phase is unbounded as $x\to\pm\infty$, but it is asymptotically consistent and of lower order than the original wave phase:
    \[
    \lim_{\eps \to 0} \left\vert \frac{\phi}{z}\right \vert <\lim_{\eps \to 0} \frac{\eps Q_2}{2k_0} \to 0
    \]
Finally, we note that this approximation and explicit solution do not give us the selected asymptotic phases of the periodic orbits, and geometrically corresponds to the
special case where the two cylinders shown in Figure~\ref{fig-manifolds} are
identical and intersect perfectly such that the two circles on the Poincar\'e
section overlap, due to integrability of the normal form. In order to capture the splitting of the circles on the
Poincar\'e section asymptotically, we would need to go to higher order and
possibly beyond all orders and we leave this for future work. 

\section{The SH357 equation}
\setcounter{equation}{0}
\label{sec-SH357-review}

The next two sections are based on the SH357 equation introduced by \textsc{Knobloch et al.}~{\cite{kuw19}}. It is of the form (\ref{primary-ode}) but with a higher order nonlinearity
\begin{align}\label{e:SH357}
   u_{xxxx} + 2u_{xx} + (1-\mu)u + au^3 - bu^5 + u^7=0\,,
   \end{align}
replacing $\lambda$ in \cite{kuw19} by $\mu$ and noting that $\sigma=2$.
Our main new result is the discovery of two codimension 2 points in this system, one of which has the right signs and produces an organizing center for a branch of heteroclinic connections.  Before presenting the
results, we review some of the relevant results in \cite{kuw19}. Their strategy for computing families of periodic solutions, as a precursor to connecting them, is very different from this paper in an interesting way.  They work from a bifurcation perspective, with bifurcation parameter $\mu$, whereas we work from a symplectic perspective, with emphasis on the action, Hamiltonian, and wavenumber spaces.

We have re-calculated some of their results for validation. Figure \ref{fig-kuw} shows our recalculation of Figure 1, Panel (a) in \cite{kuw19}.  It shows a branch of periodic orbits plotted in the $\mu$ versus $\|u\|$ plane on the left, and the Hamiltonian versus $\mu$ on the right. Our results match \cite{kuw19} to graphical accuracy.  An interesting property of the left diagram is that the turning points correspond to a change in temporal stability and give no information about spatial Floquet multipliers.  This latter observation follows by noting that periodic solutions are critical points of
\begin{equation}\label{E-def}
\mathcal{E}(u,\mu) =\int_\Omega \left[ \fr u_{xx}^2 +uu_{xx} - \fr(\mu-1)u^2 - F(u)\right]\,\rd x
\end{equation}
with $F'(u)=-au^3 + bu^5 - u^7$, and $\Omega\subset\R$ (and $\Omega=S^1$ in the periodic case).  A key functional is
\[
-\mathcal{E}_\mu = \fr \|u\|^2\,.
\]
\textsc{Maddocks}~\cite{m87} proves, for general functionals of the form (\ref{E-def}), that turning points in the $(\lambda,-\mathcal{E}_\mu)$ plane correspond to changes of (temporal) stability.  This agrees with Figure \ref{fig-kuw} (and Figure 1(a) in \cite{kuw19}), since $\|u\|=\sqrt{-2\mathcal{E}_\mu}$. Note also that the plots of $H$ versus $\mu$ are dramatically different from $H$ or $A$ versus $k$. In the $(H,\mu)-$plane, turning points appear as cusps. This feature is predicted by gradient $\mu-$based bifurcation theory (compare with Figure 3 of \textsc{Thompson}~\cite{thomp79}).
\begin{figure}[ht]
\begin{center}
\includegraphics[scale = 1]{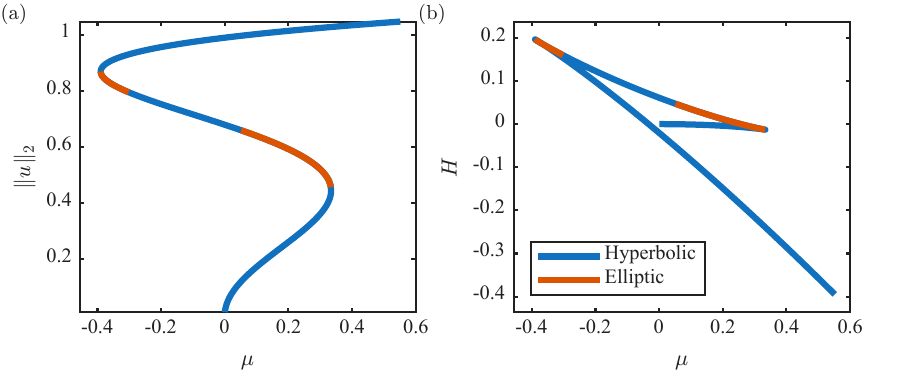}
\caption{Our re-calculation of Figure 1, Panel (a), from \textsc{Knobloch et al.}~\cite{kuw19}.  Turning points on the left panel correspond to changes in temporal stability.  We have added in our calculation of spatial Floquet multipliers, with blue indicating hyperbolic Floquet multipliers.}
\label{fig-kuw}
\end{center}
\end{figure}
We cannot just re-plot the $(H,\mu)$ plots in the $(H,k)$ plane because the wavenumber is fixed in the $(H,\mu)$ plane.  We will approach the problem afresh using our strategy of plotting $H$ (or $A$) versus wavenumber as this gives {\it bifurcation of spatial Floquet multipliers at turning points}. This strategy will be all the more important when we search for codimension 2 points as they appear naturally in the $(A,k)$ and $(H,k)$ space and do not appear in $\mu-$based planes.

With three parameters and strong nonlinearity, the SH357 equation provides an excellent test case for finding codimension two points.

\subsection{Finding and continuing codimension two points}
\label{sec-numerics-codimtwo}

In this subsection, we present a numerical algorithm for computing the codimension
two points, $A_k=A_{kk}=0$, in a field of periodic orbits. The algorithm is more general (and is applied to the ac-Boussinesq in \S\ref{sec-boussinesq}), but will be developed in the context of the SH357 equation. Step one is to identify a codimension 1 singularity; that is, a line or surface $A_k(k,\cdot)=0$, where the dot represents other parameters.  Step two is to continue this singularity in a
two-parameter space until a codimension 2 point is hit. The third step is to calculate heteroclinic connections in the unfolding of the singularity.

We will use
$\hat{u}(z,k,\mu,a,b)$ to denote a multiparameter field of $2\pi$-periodic orbits (sometimes suppressing the parameters for brevity). 
In step one, the parameters $\mu$ and $a$ are fixed, and then we work in a two-parameter family of periodic orbits parameterized by $k$ and $b$. We compute a codimension 1
curve in the $(k,b)$ space, parameterized by $s$, in some interval
$s_1<s<s_2$. The parameterized curve $(k(s),b(s))$ is defined by
\begin{equation}\label{eq:ak-curve}
\partial_k A(k(s),b(s))=0\quad\mbox{(with $a,\mu$ fixed)}\,.
\end{equation}
Figure \ref{fig:codim1-SH357} shows various perspectives on the set defined by (\ref{eq:ak-curve}) in the case $\mu = 0.3$ and $a = 1.5$.
\begin{figure}[ht]
\begin{center}
\includegraphics[scale = 1]{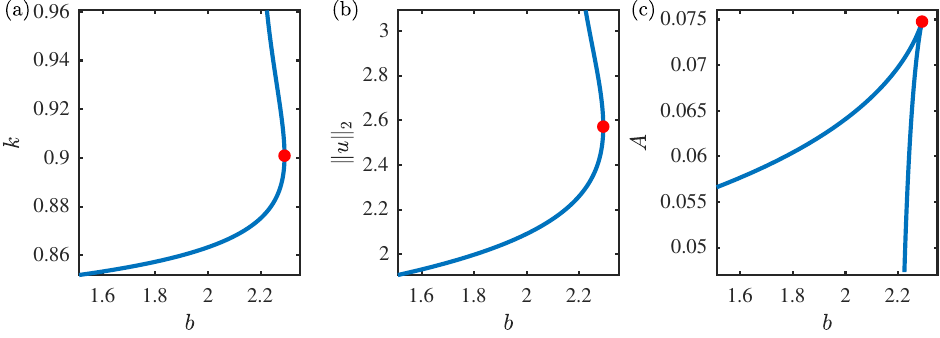}
\end{center}
\caption{(a) Plot of the curve $(k(s),b(s))$ generated by solving
  (\ref{eq:ak-curve}) for the family of periodic orbits of (\ref{e:SH357})
  with $\mu =0.3$ and $a = 1.5$. (b) A plot of the $L^2$ norm of the solution
  along the same curve as (a). (c) A plot of the action, $A$, along the same
  solution curve}
\label{fig:codim1-SH357}
\end{figure}

In principle, the codimension two point can be identified by computing $\partial_{k}^2A(k,b)$ along the codimension one curves looking for points where it changes sign.  However, we will show that there is an easier way. The relation of
the geometry of the $(k,b)-$curve to the action derivatives is summarized in
the following. 

\begin{lem}\label{lemma5.1} {\it Suppose that the curve $(k(s),b(s))$ satisfying
  $A_k(k(s),b(s))=0$ is smooth (at least differentiable), for $s_1<s<s_2$. For
  some $s^*\in(s_1,s_2)$, suppose that $b'(s^*)=0$, $k'(s^*)\neq0$, and
  $A_{kb}(k(s^*),b(s^*))\neq0$, then $(k(s^*),b(s^*))$ corresponds to a
  codimension two point:}
\begin{equation}\label{cod-two-point}
  A_k(k(s),(s)) = A_{kk}(k(s),b(s))=0\quad \Leftrightarrow\quad
  k'(s) \neq0\ \mbox{and}\ b'(s)=0\,,\quad\mbox{for some}\ s^*\in(s_1,s_2)\,.
\end{equation}
\end{lem}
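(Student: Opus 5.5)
The natural route is to differentiate the defining identity $A_k(k(s),b(s))=0$ along the curve. Since the curve is assumed differentiable and $A$ is smooth in $(k,b)$ (the periodic orbits $\uh(z,k,\mu,a,b)$ depend smoothly on parameters, so (\ref{A-per}) does too), the chain rule gives, for all $s\in(s_1,s_2)$,
\[
A_{kk}(k(s),b(s))\,k'(s) + A_{kb}(k(s),b(s))\,b'(s) = 0\,.
\]
This single linear relation between the tangent vector $(k'(s),b'(s))$ and the gradient $(A_{kk},A_{kb})$ of $A_k$ is the whole content of the lemma.

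For the forward direction, assume $b'(s^*)=0$ and $k'(s^*)\neq0$. The identity reduces to $A_{kk}\,k'(s^*)=0$, hence $A_{kk}(k(s^*),b(s^*))=0$. Combined with $A_k=0$, which holds along the whole curve by construction (\ref{eq:ak-curve}), this shows $(k(s^*),b(s^*))$ is a codimension-two point. Here the hypothesis $A_{kb}\neq0$ is not needed, but it guarantees that the point is genuinely a nondegenerate point of the curve rather than a singular one.

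For the converse, assume $A_k=A_{kk}=0$ at $s^*$. The identity then gives $A_{kb}\,b'(s^*)=0$, so $b'(s^*)=0$ by the hypothesis $A_{kb}\neq0$. To get $k'(s^*)\neq0$, I would use that the curve is regular, i.e.\ $(k',b')\neq(0,0)$; this holds for any regular parametrization, for instance arclength as produced by pseudo-arclength continuation. The implicit function theorem applies because $\nabla(A_k)=(A_{kk},A_{kb})=(0,A_{kb})\neq0$, so the zero set is a smooth curve near $s^*$ that can be written as a graph $b=b(k)$. This justifies both regularity and $k'\neq0$, and it places the codimension-two point at a fold of $b$ along the curve, which is the geometric content used numerically.

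The main obstacle is only careful bookkeeping of hypotheses. The equivalence in (\ref{cod-two-point}) needs $A_{kb}\neq0$ for the reverse implication and a regular parametrization to exclude $k'=b'=0$. I would state these assumptions explicitly and note that the forward direction uses only $k'(s^*)\neq0$.
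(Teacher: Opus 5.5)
Your proposal is correct and is the same argument as the paper's: differentiate $f(s)=A_k(k(s),b(s))\equiv0$ to get $A_{kk}k'(s)+A_{kb}b'(s)=0$, then read off both implications. Your added bookkeeping makes explicit what the paper calls ``immediate'': the forward direction uses only $k'(s^*)\neq0$, while the converse needs $A_{kb}\neq0$ and a regular parametrization to recover $k'(s^*)\neq0$. The implicit function theorem shows the zero set is locally a graph $b=g(k)$, but it does not make an arbitrary differentiable parametrization regular, so regularity must still be assumed, as you ultimately do.
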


\noindent{Proof.} Since the curve is smooth, the function
\[
f(s) := A_k(k(s),b(s)) \,,
\]
is zero for each $s$ and differentiable.  Therefore
\[
0 = f'(s) = A_{kk}(k(s),b(s))k'(s) + A_{kb}(k(s),b(s))b'(s)\,.
\]
The claim (\ref{cod-two-point}) is then immediate.$\hfill\square$
\vspace{.15cm}

In Fig. \ref{fig:codim1-SH357}, the codimension two point found with this Lemma is identified with a red dot. It is associated with a fold point of the curve in the $(k,b)$-plane. 

\begin{rmk} The condition $A_{kb}\neq0$ is generic.  To see this, set
up a moving orthonormal frame on the $(k,b)$ curve.  The tangent and a normal
vectors are
\[
{\bf t}(s) = \frac{1}{\ell(s)}\begin{pmatrix} k'(s)\\ b'(s)\end{pmatrix}
\qand {\bf n}(s) = \frac{1}{\ell(s)}\begin{pmatrix} b'(s)\\ -k'(s)\end{pmatrix}\,,
\]
with $\ell(s) = \sqrt{k'(s)k'(s)+b'(s)b'(s)}$.  The gradient of $A_k(k,b)$, now
considered as a field on $(k,b)$ space, in the direction of ${\bf n}$ at
$s=s^*$, is
\[
  {\bf n}\cdot\nabla A_k = \frac{1}{\ell(s)}\begin{pmatrix} b'(s)\\ -k'(s)\end{pmatrix}\cdot\begin{pmatrix} A_{kk}\\ A_{kb}\end{pmatrix}  =
  -\frac{1}{\ell} k'(s) A_{kb}\Big|_{s=s^*}\,.
  \]
  Hence, with $k'(s^*)\neq0$, $A_{kb}$ is proportional to the normal derivative
  of $A_k$.  Now, the curve (\ref{eq:ak-curve}) is the zero level set of $A_k(k,b)$
  and, in the direction normal to that curve, the level sets on each side are
  generically non-zero level sets. It is in this sense that
  $A_{kb}(k(s^*),b(s^*))\neq0$.
  \end{rmk}

The above approach appears to be the simplest strategy for finding the $A_k=A_{kk}=0$ codimension two points.  However, such codimension two points can be identified in other ways
  using different parameter spaces. Indeed, the most straightforward way is to
  compute $A_{kk}$ along the curve (\ref{cod-two-point}), using the formula for
  $A_{kk}$ in equation (\ref{Akk-calc}) in Appendix \ref{app-a}, and monitor
  when it passes through zero.  Interestingly, this strategy, when followed in
  the $(A,b)-$plane, produces a cusp at the codimension two point, so the point
  is easily identified. 
  
  The algorithm that implements Lemma \ref{lemma5.1} is based on solving the system 
    \begin{subequations}
    \begin{align}
    	L(k) \hat{u} + f(\hat{u}) + cu_z =& 0,\\
	L(k)\hat{u}_k + L'(k)\hat{u} + f'(\hat{u})\hat{u}_k + c_k\hat{u}_{kz}=& 0,\\
	A_k(\hat{u},\hat{u}_k) =& 0
    \end{align}
    \end{subequations}
    for $(\hat{u},\hat{u}_k,c,c_k)$ and an additional equation parameter (e.g. $b$) on $z\in[0,2\pi)$ along with periodic boundary
    conditions and phase conditions
    \[
    \int_{0}^{2\pi}u^{\mbox{old}}_z(u - u^{\mbox{old}}) \mathrm{d}z = \int_{0}^{2\pi}u^{\mbox{old}}_z(u_k - u^{\mbox{old}}) \mathrm{d}z = \int_{0}^{2\pi}u^{\mbox{old}}_z(u_{kk} - u^{\mbox{old}}) \mathrm{d}z = 0.
    \]
This allows us to continue curves of periodic orbits where $A_k=0$ and one can
then look for a cusp as another equation parameter is varied. Once the cusp
point is located, we then add two more equations,     
   \begin{align*}
	L(k)u_{kk} + 2L'(k)u_k + L''(k)u + f'(u)u_{kk} + f''(u)(u_k)^2 + c_{kk}(u_{kk})_z =& 0,\\
	L(k)u_{kkk} + 3L'(k)u_{kk} + L'''(k)u + 3L''(u)u_k + f'(u)u_{kkk}+f'''(u)u_k^3 + 3f''(u)u_ku_{kk} + c_{kkk}(u_{kkk})_z =& 0,\\
	A_{kk}(u,u_k,u_{kk}) =& 0,
	\end{align*}
 for $(u_{kk},u_{kkk},c_{kk},c_{kkk})$ with a further two phase conditions
 similar to the above, and another equation parameter (in this case, either $\mu$ or
 $a$), in order to trace out two parameter diagrams where $A_k=A_{kk}=0$. Figure
 \ref{fig:codim2-SH357} shows the result of this computation, where $a$ is
 treated as a continuation parameter for the fixed value $\mu = 0.3$. These
 codimension two curves emanate from the red dot shown in the pane of Fig.
 \ref{fig:codim1-SH357}, i.e. the values of $b$ and $k$ are given when $a =
 1.5$. In Figure \ref{fig:codim2-SH357}(b), we indicate two points on the branch of codimension 2 curves, $p_{1,2}$. The action, $A(k)$, is computed near these codimension 2 points, and this computation is shown in panel \ref{fig:codim2-SH357}(c). In this panel, the dashed curves are the plot of the action near $p_1$ and the solid curve is the plot of the action near $p_2$. 

\begin{figure}[H]
    \centering
    \includegraphics[scale = 1]{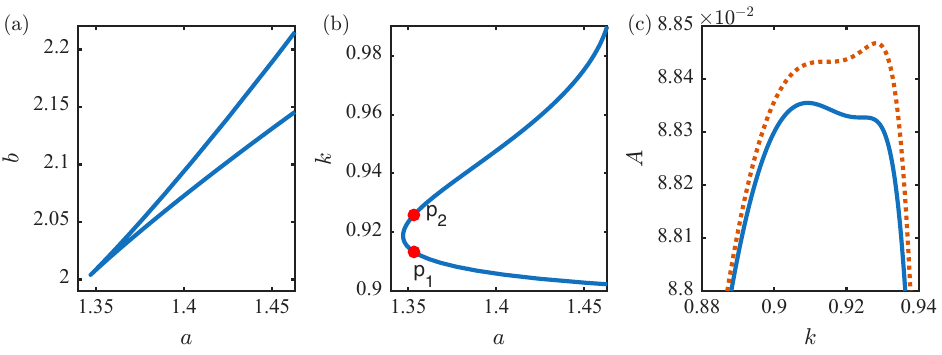}
    \caption{ (a),(b) Parameter space for the SH357 equation \eqref{e:SH357} with $\mu=0.3$ at which
    codimension 2 points occur. (c) Plot of the action, $A(k)$, near the points $p_1$ and $p_2$ labeled in panel (b), the dashed curve is near the point $p_1$ and the solid curve is contains $p_2$.}
    \label{fig:codim2-SH357}
\end{figure}

We now further probe the parameter space near the codimension two points. We summarize
these results in Figure \ref{fig:Ham_near_codim2}, where we plot an example of the
Hamiltonian \eqref{H-def}  as a function of $k$. Figure
\ref{fig:Ham_near_codim2}(a) shows the structure of the Hamiltonian as the
coefficient of the cubic term, $a$, is varied. The value of $b = 2.15$ is
fixed, and is chosen to coincide with the values obtained from the parameter
continuation in Figure \ref{fig:codim2-SH357}. The curve with parameter values near the codimension-two point
is shown in black, while the curves of other colors have
only slight changes in the parameter values. The relatively small parameter
range for which such points exist illustrates the careful computational approach
outlined in this section. Figure~\ref{fig:Ham_near_codim2}(b) is a zoom in of the  small boxed region from panel (a) to illustrate the coalescence and annihilation
of the critical points of the Hamiltonian as the parameters are varied. 
\begin{figure}[H]
    \centering
    \includegraphics[scale = 1]{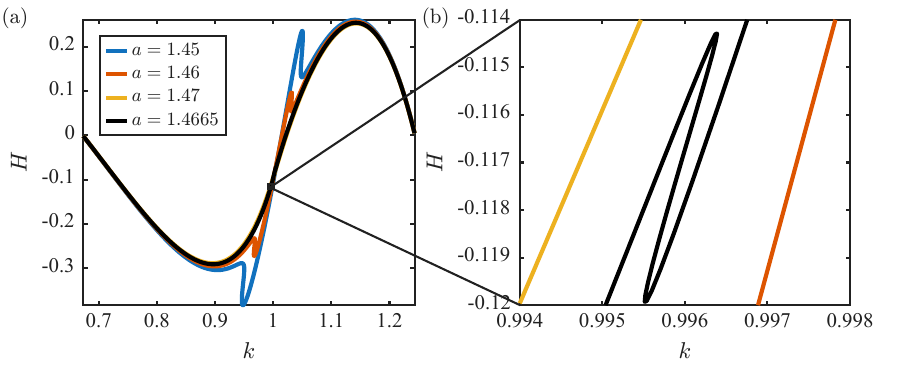}
    \caption{Behavior of the spatial Hamiltonian near the codimension 2 point for the SH357 equation with $\mu =0.3$ and $a = 1.45$. }
    \label{fig:Ham_near_codim2}
\end{figure}

\subsection{Heteroclinic connections near codimension two points}
\label{sec-SHE_PtoP}

In this section, we will outline how we compute the connecting fronts emanating
from the codimension two point found in \S\ref{sec-numerics-codimtwo}. Given
the discussion on the dimension of the intersection of the unstable and stable
manifolds of the periodic orbits in \S\ref{subsec-ubar-connect}, there are some choices in the set up the numerical methods in terms of which asymptotic parameters we wish to
select. In particular, we need to find two
parameters from the set $(k_-,k_+,\phi_-,\phi_+)$ if we do not use the
Hamiltonian restriction. In what follows, we briefly outline a numerical implementation of the
far-field core decomposition for the connecting fronts. The context will be general PDE systems whose connecting fronts select two parameters and then we
describe how one adapts this for the conservative case. 

We consider (\ref{primary-ode}) in the abstract form
\begin{equation}
L(\partial_x)u + N(u) = 0,
\end{equation}
where $L$ is a linear and $N$ a nonlinear operator.  The proposed far-field core decomposition of $u(x)$ is
\begin{equation}\label{e:far-core}
u(x) = u_-(k_-x + \phi_-)\chi_-(x) + w(x) + u_+(k_+x+\phi_+)\chi_+(x)\,,
\end{equation}
where $w(x)$ is the remainder function and $u_\pm(z)$ are periodic orbits satisfying
\begin{equation}\label{e:periodic-prob}
L(k_\pm\partial_z)u_\pm + N(u_\pm) + c_\pm (u_\pm)_z= 0,\qquad u_{\pm}(z+2\pi) = u_\pm(z),\qquad \int_0^{2\pi}u^{\mbox{old}}_z(u_\pm-u^{\mbox{old}})dz = 0\,.
\end{equation}
Upon substituting (\ref{e:far-core}) into equation (\ref{primary-ode}) we find
an equation for the core $w(x)$ given by
\begin{align}\label{e:core-eqn}
Lw + \left(w+\sum_{\pm}u_\pm\chi_\pm \right) - \sum_{\pm}\left(Lu_{\pm} + N(u_\pm) \right)\chi_\pm = 0\,.
\end{align}
We solve this equation on a large finite domain $x\in[-L,L]$, and add two phase conditions 
\begin{equation}\label{e:phase}
\int^{L}_{\pm x = L - 2\pi/k_\pm}w(x) \cdot \partial_x u_{\pm}(x;k_{\pm},\phi_\pm)dx = 0\,.
\end{equation}
In summary, we solve (\ref{e:periodic-prob}), (\ref{e:core-eqn}) and (\ref{e:phase}) for fixed $(k_-,\phi_-)$ and $(u_{\pm},w,k_+,\phi_+)$ if we wish to fix the value of the Hamiltonian level set, or we fix the phases $(\phi_-,\phi+)$ and solve for $(u_{\pm},w,k_\pm)$. 

\begin{rmk} In order to restrict to a Hamiltonian level set, the
integral condition for $u_-$ is changed to the Hamiltonian constraint. This will
restrict the periodic orbits to a specified Hamiltonian level set (i.e, this
selects a $k_-$) and we then also fix $\phi_-$. Either way, the two periodic orbits and the core will lie on \emph{some} Hamiltonian level set.
\end{rmk}

Numerically, we discretize the
equations for $u_\pm$ via a Fourier pseudo-spectral method~\cite{Trefethen2000}
and use fourth-order finite differences for the $w$ equation. They are implemented in \textsc{Matlab}, and numerical continuation is implemented using \textsc{Avitabile et al.}~\cite{Avitabile2020}. 

Based on the computation shown in Figure \ref{fig:codim2-SH357}, we seek a
heteroclinic connection in the unfolding of the codimension two point by selecting the
parameters 
$$\mu = 0.3, \quad  a = 1.45 , \quad  b = 2.15. \ $$
A front is then computed between two periodic orbits by fixing the phases at the left and right ends of the domain (set to zero) and solve for the wavenumbers while varying $a$ to approach the Hamiltonian codimension 2 point at $a\approx1.466$. In Figure~
\ref{fig:wavenum_and_sol_near_c2}, we plot the result of this front computation in Panel (b).  In Panel (a) we show that the wave numbers converge to the same value as we get close to the co-dimension 2 point. 
\begin{figure}[H]
    \centering
    \includegraphics[scale =1 ]{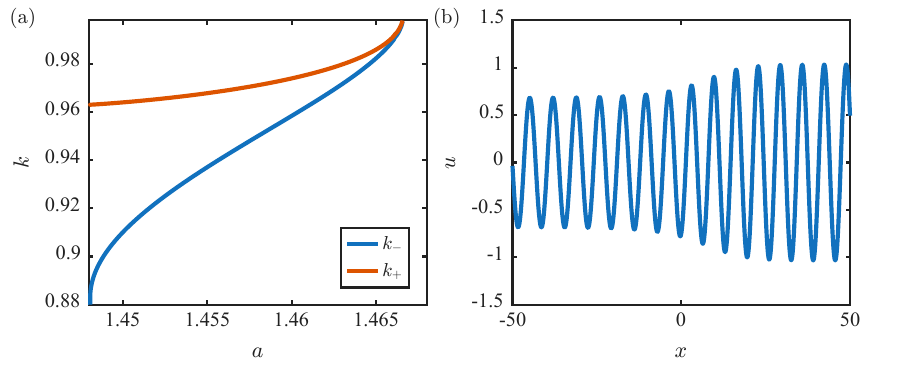}
    \caption{Wavenumber of the twp periodic orbits in (a) as a function of the parameter $a$, coalescing at the codimension two point ($a\approx 1.466$). The right panel (b) shows a connecting orbit at $a=1.45$ between two distinct periodic orbits in the unfolding of the codimension two point.}
    \label{fig:wavenum_and_sol_near_c2}
\end{figure}

In summary, we have established an algorithm for finding a codimension two point in a field of periodic solutions, and have shown how heteroclinic connections are found in the unfolding of the codimension two point, as predicted by the normal form theory.  In the next section we apply this strategy to a different PDE.

\section{Fronts near codimension-two points in a Boussinesq system}
\setcounter{equation}{0}
\label{sec-boussinesq}

The purpose of this section is three-fold: to show another example where a codimension two point can be found, to study a system that does not reduce to the SH form (\ref{primary-ode}), and to compute oscillatory fronts in a Boussinesq system for the first time. The strategy is to first reduce the Boussinesq system (\ref{ac-B}) to a steady system on $\R^4$, identify the Lagrangian, Hamiltonian, and action functionals, find a family of periodic orbits, and then implement our algorithm for zooming in on a codimension two point.

Relative to a moving frame, $x\mapsto x-Ct$, (\ref{ac-B}) is
\begin{equation}\label{ac-B-sec7}
-Ch_x + (u + hf'(u))_x +a u_{xxx} =0\qand -C u_x + f(u)_x + gh_x + c h_{xxx} = 0\,.
\end{equation}
Integrating once, and using the function $F(h,u)$ defined in (\ref{ac-F-def}), it reduces to the conservative system on $\R^4$,
\begin{equation}\label{ac-B-steady-sec7}
c h_{xx} = F_h\qand a u_{xx} = F_u\,.
\end{equation}
The Lagrangian and Hamiltonian densities for this system are
\begin{equation}\label{LH-def-bouss}
L = \fr a u_x^2 + \fr c h_x^2 + F(h,u)\qand 
H = \fr a u_x^2 + \fr c h_x^2 - F(h,u)\,.
\end{equation}
The equations (\ref{ac-B-steady-sec7}) can be expressed as a canonical Hamiltonian system, but the structure won't be needed here, just the key invariants.  The action density, for an ensemble of solutions, $h(x,s),u(x,s)$ parameterized by $s\in\R$, is
\begin{equation}\label{action-bouss}
A = a u_x u_s + c h_x h_s\,.
\end{equation}
It satisfies the action conservation law 
\begin{equation}\label{action-claw}
\frac{\partial A}{\partial x} = \frac{\partial L}{\partial s}\,.
\end{equation}
When $h,u$ are periodic in $s$, the integral of $A$ over $s$ is a relative integral invariant.

To compute codimension 2 points
in the $ac-$Boussinesq system \eqref{ac-B}, we follow a similar
approach to that outlined in Section \ref{sec-numerics-codimtwo} for the
Swift-Hohenberg equation \eqref{primary-ode}. For these computations, we fix the constants of integration to be $\iA = -2$ and $\iB = 2$, and set $C=0$. The periodic orbits of wavenumber $k$ are denoted by
\begin{equation}
  u = \uh(z,k) \quad h = \hh(z,k) \quad z = kx,
\end{equation}
and they satisfy the equations
\begin{equation}\label{eq:steady-ac-bouss}
a k^2 \uh_{zz}  + \uh + \hh \left(\uh + \alpha \uh^2\right) = -2 \qand c k^2 \hh_{zz} + \hh  + \frac{1}{2}\uh^2 + \frac{\alpha}{3} \uh^3 = +2\,.
\end{equation}
The action of this family of periodic orbits is
\begin{equation}\label{action-B-per}
A(k) = \frac{1}{2\pi} \int_0^{2\pi} \big(ak\uh_z^2 +ck\hh_z^2\big)\,\rd z\,.
\end{equation}
The codimension two point occurs when ${A}_k=A_{kk}=0$ (equivalently $H_k=H_{kk}=0$).

Periodic solutions are computed on the domain $z \in [0,2\pi)$, and gradients with respect to $k$ are computed using an approach similar to that in Section \ref{sec-numerics-codimtwo}. Codimension 2 points are found by solving for values of $k$ and $\alpha$ such that $A_k = A_{kk} = 0$ for fixed values of the dispersion coefficients $a$ and $c$. For the sake of this computation, we will fix values of $c < 0$ and treat $a$ as a continuation parameter. 

In Figure~\ref{fig:bouss_codim2}, we show the continuation of the co-dimension 2 point for various values of $c$ as $a$ is varied. In panel (a), we show the wavenumber at the codimension 2 point, and in panel (b) the corresponding selected $\alpha$ value. We plot the spatial Hamiltonian near the co-dimension 2 point for $a=0.2$ and $c=-0.3$ in the third panel of Figure~\ref{fig:bouss_codim2} The geometry of the $H-k$ plot is similar  to that in the SH357 equation in the previous section. 
\begin{figure}[p]
\begin{center}
\includegraphics[scale = 1]{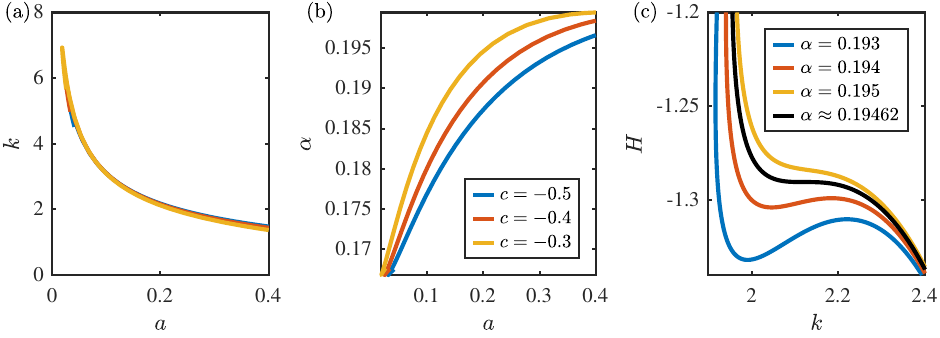}
\caption{(a), (b) Parameter values for codimension two points for the steady ac-Boussinesq equation \eqref{eq:steady-ac-bouss}. (c) Spatial Hamiltonian near the codimension two point with $a = 0.2$ and $c = -0.3$ for various $\alpha$ showing the emergence of the codimension-two point.}
\label{fig:bouss_codim2}
\end{center}
\end{figure}

To compute the fronts, that emerge from the codimension two bifurcation point, we fix the phase of the far-field periodic orbits to be zero, and solve for the wavenumbers of the far-field periodic orbits. This strategy was also used in some of the SH357 calculations.  The fronts in this case are computed using the farfield-core decomposition.

To find the codimension-two point, the strength of the highest order nonlinearity, $\alpha$, is incremented until we reach a codimension 2 point at $\alpha \approx 0.1946$.  At the codimension 2 point, the two wavenumbers coalesce to $k = k_- = k_+ \approx 2.12$. The wavenumbers of the far-field periodic orbits are shown in Fig. \ref{fig:codim2_bif_ac}(a) as a function of $\alpha$. We show a representative front solution near the codimension-two point with $\alpha = 0.194$ in Fig. \ref{fig:codim2_bif_ac}(b) and (c). For this computation, the phase of the periodic solutions are fixed to be zero. For illustrative purposes, we show a similar family of PtoP solutions in Fig. \ref{fig:codim2_bif_ac2}, where the phase of the periodic orbit near $+ \infty$ is fixed to be $-0.25$. 

\begin{figure}
    \centering
    \includegraphics[width=0.8\linewidth]{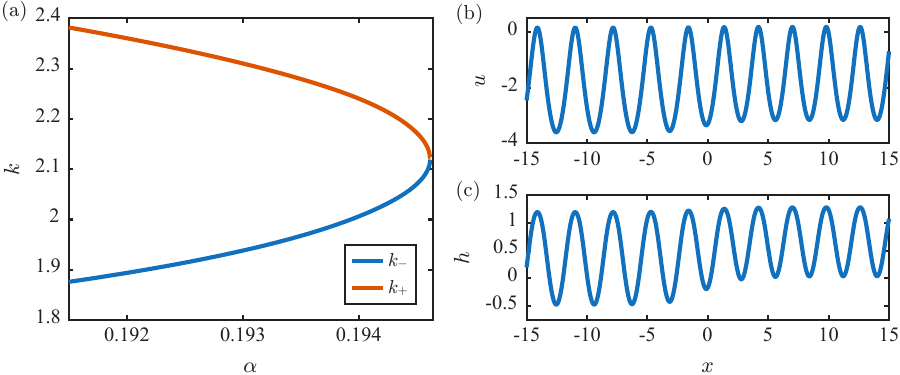}
    \caption{(a) Wavenumber of the far-field orbits that comprise the front solution of the $ac$-Boussinesq equation with $a = 0.2$ and $c = -0.3$ that bifurcate from the codimension two point. Panels (b) and (c) show two representative fronts near the codimension two point with $\alpha \approx 0.194$. }
    \label{fig:codim2_bif_ac}
\end{figure}

\begin{figure}
    \centering
    \includegraphics[width = 0.8\linewidth]{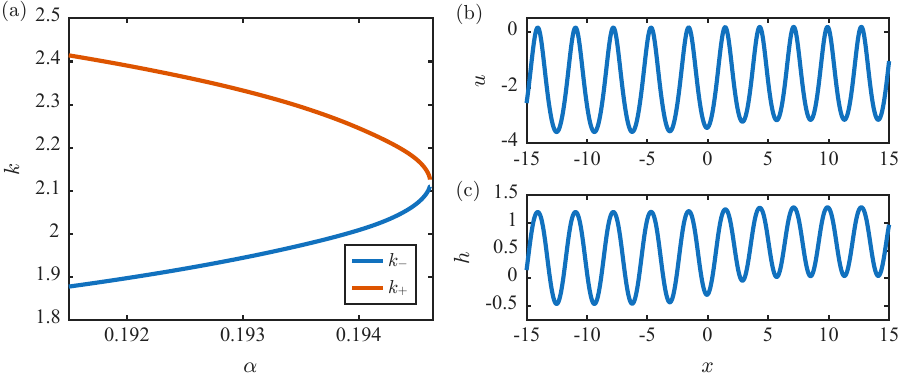}
    \caption{Same as Fig. \ref{fig:codim2_bif_ac}, where the phase of the periodic orbit at $+ \infty$ is fixed to $-0.25$. }
    \label{fig:codim2_bif_ac2}
\end{figure}

\begin{rmk} For the case of water waves ($\alpha=0$ in the nonlinearity), the strategy would be to continue the family of fronts from $\alpha\approx 0.194$ down to $\alpha=0$.  On the other hand, there are models in the theory of water waves where the higher-order nonlinearity occurs, an example being the Boussinesq equations that have been derived at the interface between two fluids of different densities (e.g. Nguyen \& Dias \cite{nd08}).
\end{rmk}

\section{Concluding remarks}\label{sec-con}

In this paper, we have clarified the geometric structure of PtoP heteroclinic
connections in conservative systems in 4 phase space dimensions. This viewpoint
makes it clear how the asymptotic phases are selected and why it is crucial to
compute them in order to find heteroclinic connections and establish multiplicity. Moreover, it points to the geometry of the (un)stable manifolds as tubes, at least in the hyperbolic case (in contrast to the inverse hyperbolic). These observations build on extensions of previous work. We have taken the subject in a new direction by introducing the role of action, in both the numerics and in the normal form theory, and demonstrated the theory with examples. The above results point to a number of new directions. 

\noindent{\bf Dimension counting.} In phase space dimension greater than 4, dimension counting can be used to ascertain whether there will be selection of the asymptotic phases. For instance, in the 4D case, we can see the left periodic orbit has a 3-dimensional unstable manifold (coming from the two unit Floquet multipliers
and one unstable multiplier) with corresponding co-dimension $= 4-3 = 1$. The
right periodic orbit has a $3-$dimensional stable manifold (coming from the two
unit Floquet multipliers and one stable multiplier) with corresponding
co-dimension $= 4-3 = 1$. The sum of the co-dimensions is $2$. Hence, the
dimension of the intersection is given by the dimension of the phase space minus
the sum of the co-dimensions, i.e. $4-2 = 2$ and so we need two parameters to be selected for the intersection to occur. Since the periodic orbits have to lie in the same Hamiltonian level set, for a given left periodic orbit with a fixed wavelength and phase, there will be a selected wavelength of the right periodic orbit due to the Hamiltonian, leaving the phase of the right periodic orbit to be selected as part of the intersection.  This dimension counting argument carries over to phase space dimension 6 and higher.  Indeed, some intriguing results on heteroclinic connections between both periodic orbits and quasiperiodic (QP) tori, in a 6-dimensional phase space, have been reported in \textsc{Baresi et al.}~\cite{bos18} and \textsc{Owen \& Baresi}~\cite{ob24}, with the latter showing that knot theory can be used to determine multiplicity of QP connections. 

\noindent{\bf Action and normal form.} A key contribution of the paper is the unfolding of the coalescence of two hyperbolic periodic orbits using modulation theory, resulting in the generic asymptotic equation \eqref{q-RE-unfolding-2}. This equation introduces a new and important quantity, the Action, for the understanding of the heteroclinic connections in Hamiltonian systems. The advantage of the modulation equation is that it helps one find where such heteroclinic connections will occur just by understanding the geometric properties of the periodic orbit at the coalescence point. While this asymptotic equation is universal, it unfortunately does not provide any information on the phase selection process at the codimension two point or the unfolding. We suspect this would require going beyond-all orders asymptotics. We have also described a systematic numerical method for locating the coalescence point and shown that this coalescence point occurs in two different systems. Hence, we believe this organizing center to be common. 

\noindent{\bf Space-time normal form.} Going back to the normal form (\ref{q-RE}), and including a slow time
$T=\eps^4t$, while also adding $u_t$ to (\ref{primary-ode}), results in a
time-dependent modulation equation, which is a Cahn-Hilliard equation,
\begin{equation}\label{ch-eqn}
q_T = \left( \mathscr{K}q_{XX} + \frrr A_{kkk} q^3\right)_{XX}\,.
\end{equation}
The Cahn-Hilliard equation has been derived before using phase modulation by
D\"ull~\cite{dull07} for a class of reaction-diffusion equations. In that case,
the nonlinearity in the normal form was quadratic. The advantage of the cubic nonlinearity is that it generates fronts in the normal form. A complete analysis of this time-dependent PDE is outside the scope of this
paper.  However, we do point to two directions of interest.  This equation could
be used to get a first approximation to the stability of the fronts found in the
normal form in \S \ref{subsec-nf-solutions}.  From results in the literature on
stability of fronts of the Cahn-Hilliard equation (e.g. \textsc{Bricmont et al.}~\cite{bkt99}), there is
reason to be optimistic that the fronts found in \S\ref{subsec-nf-solutions}
will be stable. However this stability would be for the core only, and does not take into account the stability of the states at infinity. There is also reason to be optimistic that the time-dependent equation
(\ref{ch-eqn}) will be valid, in the sense that solutions of it will stay
close to solutions of the time-dependent SH equation.  D\"ull \cite{dull07} proves the validity
of the quadratic Cahn-Hilliard equation, when compared with solutions of a
coupled reaction-diffusion equation.

\noindent{\bf Inverse hyperbolic periodic orbits.} As shown in Figure \ref{fig-hyperbolic}, there two cases of hyperbolic Floquet multipliers. In this paper we have focused on the positive case.  The negative case (inverse hyperbolic) appeared in Figure \ref{fig:bandara_comparison}, but only in the third disconnected periodic orbit. The inverse hyperbolic case needs to be treated differently. For example, the unstable manifold is not orientable and has the structure of a M\"obius band (e.g.\ Section 5 of \textsc{Meiske \& Schneider}~\cite{ms87} and Section 3 of \textsc{Aougab et al.}~\cite{abcdssw19}).  It is possible to connect hyperbolic and inverse hyperbolic periodic orbits by a heteroclinic in phase space dimension four, but the codimension two singularity here (coalescence of two periodic orbits) and associated normal form does not extend to coalescence of a hyperbolic and inverse-hyperbolic orbit, as a higher phase space dimension would be required.

\noindent{\bf Symplectic geometry.} Periodic orbits in Hamiltonian systems have a Maslov index.  There are several definitions of Maslov index in the literature (e.g. \textsc{Long}~\cite{long-book}) and some definition will be operational here.  Hence, an open question is the significance of the value of the Maslov index of each periodic orbit, or the difference between the two.  Another open question is the significance of the value of the asymptotic phases or the jump in value when crossing the surface of section.  Does it have a symplectic or other characterization?

\noindent{\bf Multi-space dimension PDEs.} Looking ahead, the methods presented in this paper can, in principle, be extended to PDEs in two space dimensions; for example, to 2D hexagon fronts on the infinite periodic strip. The 2D hexagon front asymptotics, should carry through in the similar fashion to locate ``stressed" hexagon connections and might explain the numerical investigations found on the plane~\cite{Subramanian2021}. The normal form theory can also capture, in principle, the coalescence of two hexagonal states.  Most of the numerical methods for finding the coalescence point and the far-field core method carry over to 2 space dimensions as well; see for instance~\cite{Lloyd2021,Lloyd2017}.  From an asymptotic point of view, the more interesting case is the quasi-periodic connection set up (for both ODEs in higher space dimension, as noted above, and for PDEs~\cite{Subramanian2016}), where there would be multiple `Actions' occurring. Unfolding that setup is likely to lead to new insights. Coalescence of two QP invariant tori can also be captured in a normal form.  This direction would build on the important results of \textsc{Owen \& Baresi}~\cite{ob24}.  Finally, it would be interesting to extend this theory to spatio-temporal wave train defects in Hamiltonian systems; that is, a multisymplectic version of \cite{Sandstede2004,Robert2025}. 

\subsection*{Data availability statement}
The data that support the findings of this study are openly available at \cite{Bridge2026Github}.

\section*{Acknowledgments}

TB, DR, and PS would like to thank the Isaac Newton Institute for Mathematical Sciences for support and hospitality during that programme {\it Dispersive Hydrodynamics}, held in Autumn 2022, supported by EPSRC Grant Number EP/R014604/1, where this work was initiated. The authors are grateful to Michael Shearer for many helpful discussions on the project during that programme.  The work of DL was partially supported by EPSRC Grant UKRI070. TB and DL are grateful to Nicola Baresi (Surrey Space Centre)  for illuminating discussions about related work in astrodynamics. DR and PS are grateful to Surrey Mathematics for funding visits to work on the project. For the purpose of Open Access, the authors have applied a Creative Commons Attribution (CC BY) public copyright licence to any Author Accepted Manuscript version arising from this submission.

\newpage
\begin{center}
\hrule height.15 cm
\vspace{.2cm}
--- {\Large\bf Appendix} ---
\vspace{.2cm}
\hrule height.15cm
\end{center}
\vspace{.25cm}

\begin{appendix}

\renewcommand{\theequation}{A-\arabic{equation}}
\section{\texorpdfstring{$k-$}~Derivatives of the action}
\label{app-a}
\setcounter{equation}{0}

In this appendix, we record formulas for the $k-$derivatives of the action
functional evaluated on a family of periodic orbits. For reference, the formula
for $A(k)$ is
\begin{equation}\label{A-def}
A = \frac{1}{2\pi}\int_0^{2\pi} \left(
 \sigma k \uh_z^2 - 2k^3 \uh_{zz}^2\right)\,\rd z\,.
\end{equation}
The derivatives $A_k$, $A_{kk}$ and $A_{kkk}$ are obtained by direct
calculation, so we just record the results. The first derivative is
\begin{equation}\label{Ak-calc}
A_k  = \frac{1}{2\pi} \int_0^{2\pi}\left(
\sigma \uh_z^2   + 2\sigma k \uh_z\uh_{zk} -6k^2\uh_{zz}^2 - 4k^3\uh_{zz}\uh_{zzk}\right) dz
\,.
\end{equation}
Differentiating a second time, and amalgamating terms
\begin{equation}\label{Akk-calc}
\begin{array}{rcl}
A_{kk}  &=& \frac{1}{2\pi} \int_0^{2\pi}\left(
4\sigma \uh_z\uh_{zk} + 2\sigma k \uh_{zk}^2
+ 2\sigma k \uh_z\uh_{zkk} -12 k\uh_{zz}^2 \right. \\[2mm]
&&\quad \left. -24k^2\uh_{zz}\uh_{zzk}
- 4k^3\uh_{zzk}\uh_{zzk}- 4k^3\uh_{zz}\uh_{zzkk}
 \right) dz\,.
\end{array}
\end{equation}
Differentiate a third time and simplify to get
\begin{equation}\label{Akkk-formula}
\begin{array}{rcl}
A_{kkk}  &=&\displaystyle \frac{1}{2\pi} \int_0^{2\pi}\Big(
6\sigma \uh_{zk}^2 +6\sigma \uh_z\uh_{zkk} 
 + 6\sigma k \uh_{zk}\uh_{zkk}  + 2\sigma k \uh_z\uh_{zkkk}\\[4mm]
&&\qquad\qquad -12 \uh_{zz}^2
 -72 k\uh_{zz}\uh_{zzk}  -36k^2\uh_{zzk}^2  - 36 k^2\uh_{zz}\uh_{zzkk} \\[4mm]
&&\qquad\qquad\qquad - 12k^3\uh_{zzkk}\uh_{zzk}  - 4k^3\uh_{zz}\uh_{zzkkk}
 \Big) dz\,.
\end{array}
\end{equation}
All of the $k-$derivatives of $A$ are quadratic in $\uh$, and can be expressed using the inner product.  For example,
\[
A = \sigma k \lth \uh_z,\uh_z\rth - 2k^3\lth\uh_{zz},\uh_{zz}\rth \,.
\]
The inner product representation will be important in the normal form theory.

 \renewcommand{\theequation}{B-\arabic{equation}}

 \section{Derivation of the Nonlinear Normal Form}
 \label{mod-reduction}
 In this section, we will sketch the derivation of the normal form for the emergence of a
 heteroclinic connection near the co-dimension two point, $A_k=A_{kk}=0$, using
 phase modulation.

Phase modulation for ODEs can be carried out using normal form transformations
(e.g.\ Iooss \cite{iooss1988global}, Mielke \cite{mielke1996spatial}).  For
PDEs, normal form transformations are cumbersome and therefore an ansatz
approach is used (e.g.\ Doelman, et al.~\cite{doelman2009dynamics}).  For
conservative systems, phase modulation for PDEs is based on Whitham modulation
theory which is built on an average Lagrangian (e.g. Bridges
\cite{bridges2017symmetry}).  Here, we use the ansatz approach even for ODEs, as
it is easier and will easily extend to the PDE case when we add in time
and derive the Cahn-Hilliard equation (cf.\ \S\ref{sec-con}). We will bring in the role of action {\it
a posteriori}.

The derivation of the normal form, as mentioned in the main text of the paper,
is achieved by substitution of the ansatz (\ref{modulation-ansatz}) into
(\ref{primary-ode}), expanding it in a Taylor series in $\eps$, and solving
order by order.  At zeroth order we just recover the equation for the periodic
solution, shifted in phase
\begin{equation}\label{zero-order-eqn}
k^4\uh_{zzzz}(z+\phi,k) + \sigma k^2\uh_{zz}(z+\phi,k) + V'(\uh(z+\phi,k)))=0\,.
\end{equation}
At first order, we find
\[
{\bf L}(0)\uh_k + 4k^3\uh_{zzzz} + 2\sigma k\uh_{zz} =0\,,
\]
where ${\bf L}(0)$ is defined in (\ref{L-lambda-def}). This equation is
satisfied exactly since it is just the derivative of (\ref{zero-order-eqn}) with
respect to $k$. At second order, we find
\[
{\bf L}(0)w_2 + q_X\left(4k^3\uh_{zzzk} + 6k^2\uh_{zzz} + 2\sigma k\uh_{zk} + \sigma \uh_z\right) = 0 \,.
\]
The solvability condition for this equation is that the term proportional to
$q_X$ should be orthogonal to $\uh_z$,
\[
0 = \left\langle \uh_z,4k^3\uh_{zzzk} + 6k^2\uh_{zzz} + 2\sigma k\uh_{zk} + \sigma \uh_z\right\rangle\,.
\]
Remarkably this solvability condition is precisely $A_k=0$. This result is
confirmed by differentiating
\[
A(k) = \frac{1}{2\pi}\int_0^{2\pi}\left(\sigma k \uh_z^2 - 2k^3\uh_{zz}^2\right)\,\rd z\,.
\]
The derivatives of $A(k)$ are recorded in Appendix \ref{app-a}.  The Jordan chain
theory from \S\ref{subsec-floquet} gives that the second order solution is
$w_2=q_X\xi_3 + c_2\xi_1$ where $c_2$ is an arbitrary constant.

At third order, after some calculation we find
\[
{\bf L}(0)\Big(w_3 - (\xi_3)_kqq_X - \xi_4 q_{XX}\Big)=0\,,
\]
which is trivially solvable with solution,
\[
w_3 = (\xi_3)_kqq_X + \xi_4 q_{XX} + c_3\xi_1\,,
\]
where $c_3$ is an arbitrary constant.

The solvability condition at fourth order is what generates the normal form. The
complete fourth order equation is
\begin{equation}\label{eps-four-eqn}
{\bf L}(0)w_4 + \Upsilon q^2q_X + \mathcal{T}q_{XXX} +\textsf{Solv}=0\,,
\end{equation}
where $\textsf{Solv}$ are terms that vanish identically in the solvability
condition, and the two key coefficients are
\[
\begin{array}{rcl}
\Upsilon &=&
 2k^3\uh_{zzzkkk}
+ 15k^2\uh_{zzzkk} +24k\uh_{zzzk} +4k^3 (\xi_3)_{zzzzk} \\[2mm]
&&\quad  + 6\uh_{zzz}  + 6k^2  \big(\xi_3\big)_{zzzz} \\[2mm]
&&\quad  
 + \sigma k \uh_{zkkk}  + \frac{1}{2}\sigma \uh_{zkk}  +2\sigma \uh_{zkk} 
 + 2\sigma k (\xi_3)_{kzz} + \sigma  (\xi_3)_{zz} \\[2mm]
&&\quad +\frac{1}{2}V''''(\uh)\uh_k^2\xi_3
+\frac{1}{2}V'''(\uh)\uh_{kk}\xi_3
+ V'''(\uh)\uh_k (\xi_3)_k  \,,
\end{array}
\]
and
\[
\mathcal{T}=
4k^3 (\xi_4)_{zzz} + 2\sigma k (\xi_4)_z
+6k^2(\xi_3)_{zz} +  \sigma \xi_3
+ 4k \uh_{zk}  +  \uh_z \,.
\]
Applying the solvability condition then gives
\[
\mathscr{K} q_{XXX} + \kappa q^2q_X = 0 \,,
\]
with
\[
\begin{array}{rcl}
\kappa &=& \lth \uh_z , 2k^3\uh_{zzzkkk}
+ 15k^2\uh_{zzzkk} +24k\uh_{zzzk} +4k^3 (\xi_3)_{zzzzk} \\[2mm]
&&\quad  + 6\uh_{zzz}  + 6k^2  \big(\xi_3\big)_{zzzz} \\[2mm]
&&\quad  
 + \sigma k \uh_{zkkk}  + \frac{1}{2}\sigma \uh_{zkk}  +2\sigma \uh_{zkk} 
 + 2\sigma k (\xi_3)_{kzz} + \sigma  (\xi_3)_{zz} \\[2mm]
&&\quad +\frac{1}{2}V''''(\uh)\uh_k^2\xi_3
+\frac{1}{2}V'''(\uh)\uh_{kk}\xi_3
+ V'''(\uh)\uh_k (\xi_3)_k  \rth\,.
\end{array}
\]
The other coefficient $\mathscr{K}$ is
\[
\mathscr{K} = \lth\uh_z,\mathcal{T}\rth =\lth \uh_z,
4k^3 (\xi_4)_{zzz} + 2\sigma k (\xi_4)_z
+6k^2(\xi_3)_{zz} +  \sigma \xi_3
+ 4k \uh_{zk}  +  \uh_z\rth\,.
\]
The major challenge is now to prove that $\kappa=\fr A_{kkk}$ and this is given
in Appendix \ref{app-c}. The parameter $\mathscr{K}$ can be interpreted various
ways: it can be related to the condition for termination of the Jordan
chain in (\ref{chain-4}), or characterized in terms of derivatives of the Bloch coefficient (see Appendix \ref{app:bloch}).

 \renewcommand{\theequation}{C-\arabic{equation}}

\section{Proof that \texorpdfstring{$2\kappa$} ~~equals \texorpdfstring{$A_{kkk}$} ~~in normal form}
\label{app-c}
\setcounter{equation}{0}

In this appendix the proof is sketched that the coefficient of $q^2q_X$ in the normal form,
denoted by $\kappa$, can be expressed in terms of $A_{kkk}$. Details are in the supplementary material.  Formulas for $A$ and $A_{kkk}$ are given in Appendix \ref{app-a}.

Denote by $\Upsilon$ the vector-valued coefficient of $q^2q_X$ in the $\eps^4$ equation (see equation (\ref{eps-four-eqn})),
\[
\begin{array}{rcl}
\Upsilon &=&
 2k^3\uh_{zzzkkk}
+ 15k^2\uh_{zzzkk} +24k\uh_{zzzk} + 6\uh_{zzz} \\[2mm]
&&\quad +4k^3 (\xi_3)_{zzzzk}  + 6k^2  \big(\xi_3\big)_{zzzz} 
+ 2\sigma k (\xi_3)_{kzz} + \sigma  (\xi_3)_{zz} \\[2mm]
&&\quad  
 + \sigma k \uh_{zkkk}  + \frac{5}{2}\sigma \uh_{zkk}   
  \\[2mm]
&&\quad +\frac{1}{2}V''''(\uh)\uh_k^2\xi_3
+\frac{1}{2}V'''(\uh)\uh_{kk}\xi_3
+ V'''(\uh)\uh_k (\xi_3)_k  \,.
\end{array}
\]
Applying the solvability condition, the scalar-valued coefficient $\lth\uh_z,\Upsilon\rth$ of $q^2q_X$ is then
\[
\begin{array}{rcl}
\kappa &=& \lth \uh_z , 2k^3\uh_{zzzkkk}
+ 15k^2\uh_{zzzkk} +24k\uh_{zzzk}  + 6\uh_{zzz}  \\[2mm]
&&\quad  
 + \sigma k \uh_{zkkk}  + \frac{5}{2}\sigma \uh_{zkk}  
 \\[2mm]
&&\quad +4k^3 (\xi_3)_{zzzzk} + 6k^2  \big(\xi_3\big)_{zzzz}
 + 2\sigma k (\xi_3)_{kzz} + \sigma  (\xi_3)_{zz} \\[2mm]
&&\quad +\frac{1}{2}V''''(\uh)\uh_k^2\xi_3
+\frac{1}{2}V'''(\uh)\uh_{kk}\xi_3
+ V'''(\uh)\uh_k (\xi_3)_k  \rth\,.
\end{array}
\]
The aim is to show that $2\kappa = A_{kkk}$, where $A_{kkk}$ is given in
Appendix A. We note that $A_{kkk}$ is a quadratic functional and can therefore be written in terms of the inner product.  This will be useful when comparing it with the form of $\kappa$. The inner product based expression for $A_{kkk}$, evaluated on a family of periodic orbits, is
\begin{equation}\label{AKKK-formula}
\begin{array}{rcl}
A_{kkk}  &=&\displaystyle
6\sigma \lth \uh_{zk},\uh_{zk}\rth +6\sigma \lth \uh_z,\uh_{zkk}\rth 
 + 6\sigma k \lth \uh_{zk},\uh_{zkk}\rth  + 2\sigma k \lth \uh_z,\uh_{zkkk}\rth\\[4mm]
&&\qquad\qquad -12 \lth \uh_{zz},\uh_{zz}\rth
 -72 k\lth \uh_{zz},\uh_{zzk}\rth  -36k^2\lth \uh_{zzk},\uh_{zzk}\rth  - 36 k^2\lth \uh_{zz},\uh_{zzkk}\rth \\[4mm]
&&\qquad\qquad\qquad - 12k^3\lth \uh_{zzkk},\uh_{zzk}\rth  - 4k^3\lth \uh_{zz},u_{zzkkk}\rth\,.
\end{array}
\end{equation}
In $A_{kkk}$ are $k$ and $z$ derivatives of $\uh$, whereas in $\kappa$ we find terms with $\xi_3$ as well as derivatives with respect to $z$ and $k$ of $\xi_3$.  Starting with the Jordan chain equation for $\xi_3$ and differentiating, we find equations for $\xi_3$, $(\xi_3)_k$, $(\xi_3)_z$, and so on.  Back substituting all these equations, integrating by parts, and simplifying gives the result.  The details are in the supplementary material.

 \renewcommand{\theequation}{D-\arabic{equation}}

\section{Connection between \texorpdfstring{$\mathscr{K}$} ~and the Bloch spectrum}\label{app:bloch}

This appendix is concerned with connecting the coefficient of the spatial
derivative with a quantity that can be computed independently of the asymptotic
analysis, which transpires to be the Bloch spectrum of the original wavetrain
$\hat{u}$. To observe this, we follow previous work on phase dynamics
\cite{doelman2009dynamics,ratliff2021} and construct the Bloch spectral problem
for (\ref{primary-ode}):
\begin{equation}
    {\bf L}(\nu) = \left(i\nu+k\partial _z\right)^4+\sigma \left(i \nu+k\partial _z\right)^2+V''(\hat{u})
\end{equation}
Whilst the main paper focuses on the Goldstein mode to this operator (i.e.
exactly at $\nu = 0$), we will instead consider the generalization where the
operator admits a continuous spectrum near zero:
\begin{equation}\label{eqn:eigen-problem}
{\bf L}(\nu) W(z,\nu) = \mu(\nu)W(z,\nu)\,.
\end{equation}
Here we assume the eigenvalue $\mu(\nu)$ is at least 4 times differentiable in the neighborhood of
$\nu=0$. The relationship between the Bloch spectrum $\mu(\nu)$ and
$\mathscr{K}$ is established by taking $\nu-$derivatives and relating these derivatives to inner products of the basic
wave. Since ${\bf L}(0)$ and the original system are invariant under $z\to-z$
transformations, it follows that the Bloch spectrum should also be invariant
under $\nu \to -\nu$ transformations. This leads to the observation that the
spectrum $\mu(\nu)$ is an even function of $\nu$, so $\mu'(0) = \mu '''(0) = 0$.
Furthermore, ${\bf L}(0)$ is self adjoint. Finally, given the coincidence of
${\bf L}(0)$ with the linear operator of the original system extracted from a
$z$ derivative of (\ref{primary-ode}) we have  that $\mu(0) = 0$ and $W(z,0) =
\hat{u}_z\,.$ This means that $\hat{u}_z$ is also the adjoint eigenfunction by
assumption.

We now seek a relationship between $\mu$ and the basic state. The first
derivative of (\ref{eqn:eigen-problem}) evaluated at zero gives
\[
{\bf L}(0) \partial_\nu W(z,0) = \mu'(0) \hat{u}_z-{\bf L}'(0)\hat{u}_z = -(4k^3\hat{u}_{zzzz}+2k\sigma \hat{u}_{zz})\,,
\]
where the primes herein denote differentiation with respect to $\nu$.
Comparisons to a $k$ derivative of the equation for the basic state and the
above expression shows that
\[
\partial_\nu W(z,0) = i\hat{u}_k\,.
\]
Taking a second derivative of (\ref{eqn:eigen-problem}) and evaluating it at
zero gives the next order problem:
\[
{\bf L}(0)\partial_\nu^2 W(z,0) = \mu''(0)\hat{u}_z-2i{\bf L}'(0)\hat{u}_k-{\bf L}''(0)\hat{u}_z = \mu''(0)\hat{u}_z+8k^3\hat{u}_{zzzk}+4k\sigma \hat{u}_{zk}+12k^2\hat{u}_{zzz}+2\sigma \hat{u}_z\,.
\]
This system is solvable, which is required as is its the derivative of a strict
equality, when the right hand side vanishes under the inner product established
in the main paper. We can see that
\[
\begin{split}
\mu''(0) &= -\frac{\langang \hat{u}_z,8k^3\hat{u}_{zzzk}+4k\sigma \hat{u}_{zk}+12k^2\hat{u}_{zzz}+2\sigma \hat{u}_z\rangang}{\langang \hat{u}_z,\hat{u}_z\rangang}\\[3mm]
=& -\frac{1}{2\pi \langang \hat{u}_z,\hat{u}_z\rangang}\int_0^{2\pi}2\sigma \hat{u}_z^2+4\sigma \hat{u}_z\hat{u}_{zk}-12k^2\hat{u}_{zz}^2-8k^3\hat{u}_{zz}\hat{u}_{zzk} \, dz= -\frac{2 A_k}{\langang \hat{u}_z,\hat{u}_z\rangang}\,.
\end{split}
\]
We note that the denominator here can be removed by redefining $W(z,0) =
\hat{u}_z/\langang \hat{u}_z,\hat{u}_z\rangang$. 
The condition assumed in the main paper is that $A_k = 0$, meaning this
derivative of $\mu$ vanishes and thus the equation at this order is solvable
with
\[
\partial_\nu^2W(z,0) = -2\xi_3\,.
\]

This will be the final parts of our analysis. A third derivative, with all our
previous results, gives
\[
{\bf L}\partial_\nu^3W(z,0) = -\left[{\bf L}'''(0)\hat{u}_z+3i{\bf L}''(0)\hat{u}_k-3{\bf L}'(0)\xi_3\right]
\]
which from the main paper gives that $\partial_\nu^3W(z,0) = -6i\xi_4$. One
final derivative gives our final order, where the key result emerges:
\[
\begin{split}
{\bf L}\partial_\lambda^4 W(z,0) &= \mu^{(4)}(0)\hat{u}_z-\left[{\bf L}''''(0)\hat{u}_z+4i{\bf L}'''(0)\hat{u}_k-6{\bf L}''(0)(2\xi_3)-4i{\bf L}'(0)(6\xi_4)\right]\\[3mm]
&=\mu^{(4)}(0)\hat{u}_z-24\left[ \hat{u}_z+4k\hat{u}_{zk}+(6k^2\partial_{zz}+\sigma)\xi_3+(4k^3\partial_z^3+2\sigma k\partial_z)\xi_4\right]
\end{split}
\]
Imposing solvability and using the definition of $\mathscr{K}$ gives our desired
result:
\[
\mu''''(0) = \frac{24 \mathscr{K}}{\langang \hat{u}_z,\hat{u}_z\rangang}\,,
\]
with the observation that one can again remove the denominator by redefining
$W(z,0)$. 
\end{appendix}

 \renewcommand{\theequation}{E-\arabic{equation}}

\section{Underpinning symplectic geometry}
\setcounter{equation}{0}
\label{sec-symplectic}

The properties and implications of action become clearer when the symplectic
structure of the governing equations is revealed.  We already showed that the
first-order form (\ref{first-order-form}) is Hamiltonian with a non-canonical
symplectic operator ${\bf J}$, defined in (\ref{J-def}). We now transform the coordinates $(u_1,u_2,u_3,u_4)$ to canonical $(q_1,q_2,p_1,p_2)$ via
  \[
  \begin{pmatrix} q_1\\ q_2 \\ p_1\\ p_2 \end{pmatrix}
  = \left[\begin{matrix} 1 & 0 & 0 & 0 \\ 0 & 0 & 1 & 0 \\
      0 & \sigma & 0 & 1 \\ 0 & 1 & 0 & 0 \end{matrix}\right]
  \begin{pmatrix} u_1\\ u_2 \\ u_3\\ u_4 \end{pmatrix}\,.
  \]
  This transformation, denoted by ${\bf T}$ is symplectic in the sense that
  ${\bf T}^T{\bf J}_C{\bf T} = {\bf J}$, where ${\bf J}_C$ is the canonical
  symplectic operator
  \[
  {\bf J}_C :=
  \left[\begin{matrix} 0 & 0 & -1 & 0 \\ 0 & 0 & 0 & -1 \\
      1 & 0 & 0 & 0 \\ 0 & 1 & 0 & 0 \end{matrix}\right]\,.
  \]
  The first-order form (\ref{first-order-form}) is then transformed to
  \begin{equation}\label{canon-ham}
  -{\bf p}_x= \partial_{\bf q}H \qand {\bf q}_x = \partial_{\bf p}H\,,
    \end{equation}
  with Hamiltonian function,
  \[
  H({\bf q},{\bf p}) = p_1p_2 - \frac{\sigma}{2}p_2^2 - \frac{1}{2} q_2^2 + V(q_1)\,.
  \]
  In canonical coordinates, the action is based on the one form ${\bf
  p}\cdot\rd{\bf q}$.  Let $({\bf q}(x,s),{\bf p}(x,s))$ be solutions of
  (\ref{canon-ham}) parameterized by $s$ and $2\pi-$periodic in $s$, as in
  (\ref{action-def}).  Then 
  \[
  \frac{d\ }{dx} \oint {\bf p}\cdot{\bf q}_s\,\rd s = 0 \,,
  \]
  which can be confirmed by direction calculation.  This is in fact the
  Poincar\'e-Cartan theorem, which can be found in most textbooks on Hamiltonian
  dynamics. Figure \ref{fig-action-cylinder} shows a schematic of the theorem in
  action.  Back substituting for ${\bf q}$ and ${\bf p}$,
  \[
  p_1(q_1)_s + p_2(q_2)_s = (u_4+\sigma u_2) (u_1)_s + u_2(u_3)_s = (u_{xxx}+
  \sigma u_x)u_s + u_xu_{xxs}\,,
  \]
  recovering the definition
  of action in (\ref{action-def}).

  A second result which becomes clearer in the Hamiltonian setting is the
  formula $H_k = kA_k$ proved in Lemma \ref{lemma-Hk}.  Here it will follow from
  a constrained variational principle.  Let $z=kx$ and substitute into
  (\ref{canon-ham}), giving
  \begin{equation}\label{canon-ham-k}
  k\underbrace{\left[\begin{matrix} {\bf 0} & -{\bf I}\\ {\bf I} & {\bf 0}\end{matrix}\right] \begin{pmatrix}{\bf q}\\ {\bf p} \end{pmatrix}_z }_{\displaystyle \nabla A}= \underbrace{\begin{pmatrix} \partial_{\bf q}H\\ \partial_{\bf p}H\end{pmatrix}}_{\displaystyle \nabla H}\,,
  \end{equation}
  In other words, $2\pi-$ periodic solutions can be characterized as critical
  points of $H$ on level sets of the action, with $k$ as Lagrange multiplier,
  and (\ref{canon-ham-k}) as the Lagrange necessary condition; that is, $\nabla
  H=k\nabla A$.  Given a solution of this variational principle, it follows that
  \[
  H_k = \lth\nabla H,U_k\rth = \lth\nabla H - k \nabla A,U_k\rth + k\lth \nabla A, U\rth = k A_k\,,
  \]
  on solutions, with $U:=({\bf q},{\bf p})$, giving a concise proof of Lemma \ref{lemma-Hk}, where $\lth\cdot,\cdot\rth$ is
  an inner product for $2\pi-$periodic vector valued functions, generalizing
  (\ref{ip-def}).  

  Since the system (\ref{canon-ham})is first order, the Jordan chain theory of \S\ref{sec-periodic-states} is for the linear eigenvalue problem
  \[
  \big[ D^2 H - kD^2 A\big]{\bf v} = \lambda {\bf J}_C{\bf v}\,.
  \]
  The left-hand side has a symmetric operator and the right-hand side has a skew-symmetric operator.  The Jordan chain, for an eigenvalue $\lambda_*$ satisfies
  \[
  {\bf L}{\bf v}_{j+1} = \lambda_*{\bf J}_C{\bf v}_{j+1} + {\bf J}{\bf v}_{j}\,,\quad j=1,2,\ldots\,,\quad \mbox{with}\ {\bf L} =D^2H - k{\bf J}_C\frac{d\ }{dz}\,.
\]

In the symplectic setting, other new variational characterizations appear.  For example, in the case of SH357, all the parameters in $H$ appear linearly, and so it can be split into a sum of   functionals
  \[
  H({\bf q},{\bf p}) = H_0 - \mu H_1 - a H_2 - b H_3\,.
  \]
  In this case the Lagrange
  necessary condition, generalizing (\ref{canon-ham-k}), is
  \[
  \nabla H_0 = k\nabla A + \mu \nabla H_1 + a\nabla H_2+b\nabla H_3\,,
  \]
  with $k,\mu,a,b$ now considered as Lagrange multipliers.  However, we have not
  as yet found any advantage to this more general variational principle.

    The symplectic form of the equations also
    simplifies the phase modulation.  For example, without any structure, the
    formula relating $A_{kkk}$ to the normal form coefficient $\kappa$ in Appendix \ref{app-c} is quite surprising. Using
    symplectic phase modulation (e.g.\ Bridges \cite{bridges2017symmetry}), it
    can be reduced to a few terms.  Indeed, we did the calculation first from a
    symplectic viewpoint and then transformed coordinates to apply to the setting of the scalar-valued ODE (\ref{primary-ode}).

\bibliographystyle{acm}
\bibliography{Heteroclinic_bib}

\end{document}